\def\ddx{\partial^2_{xx}\/}
\def\<{\langle}
\def\>{\rangle}
\def\eps{\varepsilon}
\def\ZZ{\mathbb{Z}}
\def\RR{\mathbb{R}}
\def\calf{\mathcal{F}}
\def\tr{\operatorname{Tr\,}}
\def\id{\operatorname{id\,}}
\def\Div{\operatorname{div}}
\def\vol{\operatorname{vol}}
\def\eq{\hspace*{-1mm}&=&\hspace*{-1mm}}
\def\plus{\hspace*{-1mm}&+&\hspace*{-1mm}}
\def\dt{\partial_t}
\newtheorem{corollary}{Corollary}
\newtheorem{definition}{Definition}
\newtheorem{example}{Example}
\newtheorem{remark}{Remark}
\newtheorem{lemma}{Lemma}
\newtheorem{proposition}{Proposition}
\newtheorem{theorem}{Theorem}
\author{Vladimir Rovenski
\thanks{
        Mathematical Department, University of Haifa,
        E-mail: rovenski@math.haifa.ac.il}
   \ and \ Robert Wolak\thanks{
        Faculty of Mathematics and Computer Science, The Institute of Mathematics of Jagiellonian University, Krakow,
        E-mail: robert.wolak@im.uj.edu.pl}}
\title{Deforming metrics of foliations}
\begin{document}

\date{}

\maketitle


\begin{abstract}
We study geometry of a manifold endowed with two complementary orthogonal distributi\-ons 
(plane fields)
and a time-dependent Riemannian metric.
The~work begins with formulae concerning deformations of geometric quantities as the metric varies conformally along one of the distributions.
Then we introduce the geometric flow depending on the mean curvature vector of the distribution,
and show existence/uniquenes and convergence of a solution as $t\to\infty$,
when the complementary distribution is integrable with compact leaves.
We~apply the method to the problem of prescribing mean curvature vector field of a foliation,
and give examples for harmonic and umbilical foliations and for the double-twisted product metrics,
including the codimension-one case.

\vskip.5mm\noindent
\textbf{Keywords}: Riemannian metric; foliation; distribution; geometric flow; second fundamental tensor;
mean curvature; harmonic; umbilical; heat equation; laplacian; double-twisted product

\vskip.5mm\noindent
\textbf{Mathematics Subject Classifications (2010)} Primary 53C12; Secondary 53C44
\end{abstract}

\section{Introduction}
\label{sec:1-intro}

\textit{Geometric Flows}  (GFs) are important in many fields of mathematics and  physics.
A~GF is an evolution of a geometric structure under a differential equation related to a functional on a manifold, usually associated with some curvature.
The most popular GFs in mathematics are the \textit{heat flow}
(\cite{jj2011}, \cite{ML1951} etc), the \textit{Ricci flow} (\cite{bre}, \cite{to} etc)
 and the \textit{mean curvature flow}.
They all correspond to dynamical systems in the infinite-dimensional space of all appropriate geometric structures
on a given manifold.
GF equations are quite difficult to solve in all generality, because of nonlinearity. Although the short time existence of solutions is guaranteed by the parabolic or hyperbolic nature of the equations, their (long time) convergence to canonical geometric structures is analyzed under various conditions.

\textit{Extrinsic geometry} of foliated Riemannian manifolds describes properties which can be expressed in terms of the second fundamental form of the leaves
and its invariants (mean curvature vector, higher mean curvatures and so~on).
One of the principal problems of extrinsic geometry of foliations reads as follows:
 \textit{Given a foliation $\calf$ on a manifold $M$ and an extrinsic geometric property (P), does there exist a Riemannian metric $g$ on $M$ such that $\calf$ enjoys (P) w.\,r.\,t.~$g$?}

Such problems (first posed by H.\,Gluck in 1979 for geodesic foliations) were studied already in the 1970's when D.\,Sullivan provided a topological condition (called {\it topological tautness}) for a foliation, equivalent to the existence of a Riemannian metric making all the leaves minimal (see \cite{cc1}).
 In~recent decades, several tools providing results of this sort have been developed. Among them, one may find
 {\it foliated cycles} \cite{su1} and
 new \textit{integral formulae}, \cite[Part 1]{rw-m}, \cite{rw0}, \cite{rov2}, \cite{wa1},
the very first of which is Reeb's vanishing of the integral of the mean curvature.
  Few works consider GFs on foliated manifolds, see \cite{A-LK2001}, \cite{LMR}, \cite{nrt}).
A GF on a foliated manifold is \textit{extrinsic}, if the evolution depends on the second fundamental tensor of the (leaves of the) foliation.
 Recently, the first author and P.\,Walczak introduced and studied \textit{extrinsic GF}s
 on codimension-one foliations, see  \cite[Parts 2 and 3]{rw-m}.
The~paper extends this field of research for foliations of arbitrary codimension.

Let $(M, g)$ be a connected closed Riemannian manifold with a pair of complementary orthogonal distributions $D$ and ${D^\perp}$
(plane fields) of dimensions $n$ and $p$, respectively.
If $D$ (or $D^\perp$) is integrable, it is tangent to a foliation $\calf$ (respectively, $\calf^\perp$).
Denote by
$b$ and ${b^\perp}$ the second fundamental tensors of $D$ and ${D^\perp}$ (w.\,r.\,t. to $g$),
$H=\tr_{\!g} b\in\Gamma(D^\perp)$ and ${H^\perp}=\tr_{\!g}({b^\perp})\in\Gamma(D)$ their mean curvature vectors.
 We~call $D$ (or $D^\perp$) \textit{umbilical}, \textit{harmonic} or \textit{totally geodesic} if
\[
 n\,b=H\cdot g_{\,|D},\ \
 H=0\ \ {\rm or} \ \ b=0\quad
 ({\rm respectively}, \ p\,b^\perp=H^\perp\cdot g_{\,|D^\perp},\ \ H^\perp=0 \ \ {\rm or} \ \ b^\perp=0).
\]
By means of the natural representation of the structure group ${\rm O}(p)\times {\rm O}(n)$ on $TM$,
Naveira \cite{N1983} obtained thirty-six distinguished classes of Riemannian almost-product manifolds.
Following this line of research, several geometers, \cite{GM1983}, \cite{M1984} and \cite{M1983}, completed the geometric interpretation, gave nontrivial examples for each class, and studied the behavior of the different conditions (and hence of the different classes of almost-product structures) under a conformal change of the metric.

 The notion of the $D$-\emph{truncated} $(r,k)$-\emph{tensor field} $\hat S$
(where $r=0,1$, and $\ \widehat{}\,$ denotes the $D$-component)  will be helpful:
 $\hat S(X_1,\dots,X_k) = S(\hat X_1,\dots,\hat X_k)\ (X_i\in TM)$.
We introduce the \textit{Extrinsic Geometric Flow} (EGF) as a family  $g_t$ of Riemannian metrics on $M$ satisfying
the PDE
\begin{equation}\label{eq1}
 \dt g_t = -\frac2n\,(\Div^\perp\!H_t)\,\hat g_t,
\end{equation}
where $g_0=g$ is given.
Here $\hat g$ is the $D$-truncated metric tensor $g$ on $M$,
i.e., $\hat g(X,Y)=g(X,Y)$ and $\hat g(\xi,\cdot)=0$ for all $X,Y\in D$ and $\xi\in D^\perp$.
Some operations w.\,r.\,t. EGF metrics $g_t$ restricted on $D^\perp$ are $t$-independent
(e.g., $\nabla^\perp$ and $\Div^\perp$).
Given a Riemannian metric $g$, the mean curvature vector $H$
can be expressed in terms of the first partial derivatives of~$g$.
Therefore $g\mapsto\Div^\perp H$ is a second-order partial differential operator.

We show that EGFs serve as a tool for studying the following:

\vskip1mm
\textbf{Question:} \emph{Under what conditions on $(M,D)$ the EGF metrics $g_t$ converge to one for which $D$
enjoys a given extrinsic geometric property (P), e.g., is harmonic, umbilical, or totally geodesic\,?}

\vskip1mm
The structure of the paper is as follows. Section~\ref{sec:main-res} collects main results.
Section~\ref{subsec:tvar} develops formulae for the deformation of
extrinsic geometric quantities of a foliation as the Riemannian metric varies conformally along $D$.
Section~\ref{sec:1-egf} introduces EGF and shows the existence/uniquenes and converging of a global solution $g_t\ (t\ge0)$. In particular, is is verified that

 (i) the 1-form $\theta_H$ (dual to $H$) satisfies the heat equation along $D^\perp$.

 (ii) the metrics $g_t$ preserve the ''umbilical'' (''totally geodesic'', etc.) property of $D$.

 (iii) for appropriate $g_0$ the metrics $g_t$ converge to a metric $g_{\infty}$ with ``harmonic distribution $D$".

\vskip.5mm\noindent
In Theorem~\ref{T-02}, the orthogonal distribution $D^\perp$ is integrable and the method of proof
is based on solving the heat equation for 1-forms on the leaves of $D^\perp$.
In Theorem~\ref{T-03} the method is applied to the problem of prescribing mean curvature vector field of $D$.
Section~\ref{sec:more} contains results and examples for codimension-one foliations (Proposition~\ref{C-2ntau1}), and double-twisted product metrics.
Appendix (Section~\ref{sec:1-prelim}) collects the necessary facts about the heat equation and the heat flow for 1-forms.

\section{Main Results}
\label{sec:main-res}

We define the connection $\nabla^\perp$ induced on~$D^\perp$ by $\nabla^\perp_X\xi =(\nabla_X \xi)^\perp$ (i.e., $\nabla_X \xi $ is projected onto $D^\perp$),
where $\xi\in D^\perp$ and $X\in TM$.
 In this section we suppose that \textit{${D^\perp}$ is integrable with all leaves compact and orientable}.
(If both $D$ and $D^\perp$ are integrable and the foliation tangent to $D^\perp$ is totally geodesic
then the foliation tangent to $D$ is \textit{Riemannian}).

 Denote $\theta_\xi$ the 1-form on $D^\perp$ dual to the vector field $\xi\in\Gamma(D^\perp)$.
The 1-form $\theta_\xi$ is $D^\perp$-harmonic (see Section~\ref{sec:2.4-heat1})
if and only if
 $\delta^\perp\theta_\xi=0$ (i.e., $\Div^\perp\!\xi=0$)
 and
 $d^\perp\theta_\xi=0$ (i.e., $\nabla^\perp\xi$ is symmetric).

\begin{theorem}\label{T-02}
Let the leaves of ${D^\perp}$ compose a fibration $L^\perp\overset{i}\hookrightarrow M\overset{\pi}\to B$.
 If $\,\dim D^\perp=p>1$, suppose that $d^\perp\theta_{H}=0$.
Then (\ref{eq1}) admits a unique smooth solution $g_t$ for all $t\ge0$
that converges in $C^\infty$-topology as $t\to\infty$ to a Riemannian metric $g_\infty$
for which $D$ is harmonic.
\end{theorem}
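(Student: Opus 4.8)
The plan is to reduce the EGF \eqref{eq1} to a linear heat equation for the mean curvature $1$-form $\theta_H$ on the fibres $L^\perp$, exploit parabolic theory (short-time existence, smoothing, convergence) summarized in the Appendix, and then translate the asymptotics back to the metrics $g_t$. First I would show that along the flow the conformal factor affects only the $D$-part of the metric, so on each leaf $L^\perp$ the restricted metric $g_t|_{D^\perp}$, the induced connection $\nabla^\perp$, the codifferential $\delta^\perp$ and the rough/Hodge Laplacian $\Delta^\perp$ are all $t$-independent (this is asserted in the Introduction). Using the deformation formulae of Section~\ref{subsec:tvar} for a conformal change of $g$ along $D$, I would compute $\dt H_t$ and $\dt\theta_{H_t}$ and check that \eqref{eq1} forces
\begin{equation*}
\dt\,\theta_{H_t} = \Delta^\perp\,\theta_{H_t}
\end{equation*}
on each leaf, i.e.\ $\theta_H$ satisfies the (leafwise) heat equation for $1$-forms. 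The hypothesis $d^\perp\theta_H=0$ when $p>1$ is what makes this reduction clean: the closedness of $\theta_H$ is preserved by the heat flow on $1$-forms (it commutes with $d^\perp$), so the Hodge Laplacian acting on $\theta_H$ reduces to the ``divergence'' term $-d^\perp\delta^\perp\theta_H$, which is exactly the right-hand side of \eqref{eq1} once dualized; for $p=1$ the condition is vacuous since every $1$-form on a $1$-dimensional leaf is closed.

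With the scalar/vector heat equation in hand on the compact leaves, I would invoke the standard theory (Appendix, Section~\ref{sec:1-prelim}): existence and uniqueness of a smooth solution $\theta_{H_t}$ for all $t\ge 0$, with $\theta_{H_t}\to\theta_{H_\infty}$ in $C^\infty$ as $t\to\infty$, where $\theta_{H_\infty}$ is the $\Delta^\perp$-harmonic representative of the de Rham class $[\theta_{H_0}]\in H^1(L^\perp)$ on each fibre (the fibration structure $L^\perp\hookrightarrow M\to B$ guarantees the leaves form a nice compact family, so the convergence is uniform in the base parameter $b\in B$ and the limit $1$-form is smooth on $M$). Integrating $\dt g_t$ in $t$ — the right-hand side of \eqref{eq1} is $-\tfrac2n(\delta^\perp\theta_{H_t})\hat g_t$, and $\delta^\perp\theta_{H_t}\to 0$ exponentially because the nonzero spectrum of $\Delta^\perp$ on exact/coexact forms is bounded below on each compact leaf and (by compactness of $B$) uniformly so — gives $g_t\to g_\infty$ in $C^\infty$, with $g_\infty$ a genuine Riemannian metric (the conformal factor stays bounded away from $0$ and $\infty$). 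Finally, since $\theta_{H_\infty}$ is $\Delta^\perp$-harmonic and (being a limit of closed forms) closed, it is also coclosed, so $\delta^\perp\theta_{H_\infty}=0$, i.e.\ $\Div^\perp H_\infty=0$; combined with $d^\perp\theta_{H_\infty}=0$ this says $\theta_{H_\infty}$ is $D^\perp$-harmonic, and in particular $H_\infty=0$ would follow if $H^1(L^\perp)=0$, but in general the relevant conclusion is precisely that $D$ is \emph{harmonic} with respect to $g_\infty$ in the sense defined — one must check the definition of ``harmonic distribution'' used here matches ``$\Div^\perp H=0$ together with the closedness already preserved'', equivalently that the limit mean curvature is that of a harmonic foliation.

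The main obstacle I anticipate is twofold. First, verifying rigorously that \eqref{eq1} is \emph{equivalent} to the leafwise heat equation for $\theta_H$ — this requires the Section~\ref{subsec:tvar} deformation formulae to show that $\dt\theta_{H_t}$, computed from $\dt g_t$, equals $\Delta^\perp\theta_{H_t}$ and not merely something cohomologous to it; the bookkeeping of how $H_t$ (a second-order expression in $g_t$) changes under the purely $D$-conformal deformation is where the real computation lives, and one must confirm that no $D^\perp$-derivatives of the conformal factor intrude (they don't, because the factor is constant along $D^\perp$ — but this needs the structure of \eqref{eq1} and should be stated carefully). Second, upgrading leafwise convergence to $C^\infty(M)$ convergence and uniform exponential decay of $\delta^\perp\theta_{H_t}$ requires the compact fibration hypothesis to control the spectral gap and elliptic constants uniformly over $B$; this is routine given compactness of $M$ but should be acknowledged. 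The roles of $d^\perp\theta_H=0$ (to kill the curl part and make $\Delta^\perp$ act as a divergence-type operator matching \eqref{eq1}) and of orientability/compactness of the leaves (for Hodge theory and the heat-kernel asymptotics) should both be flagged explicitly.
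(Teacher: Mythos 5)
Your overall strategy is exactly the paper's: reduce (\ref{eq1}) via the deformation formula $\dt\theta_H=-\frac n2\,d^\perp s$ to the leafwise equation $\dt\theta^t_H=-d^\perp\delta^\perp\theta^t_H$, use preservation of closedness to identify this with the Hodge heat flow $\dt\omega=\Delta^\perp_d\omega$ of Theorem~B, get exponential convergence to a $D^\perp$-harmonic limit, and then integrate the conformal factor (which decays like $e^{-\lambda_1 t}$ uniformly over the compact fibration) to obtain $C^\infty$-convergence $g_t\to g_\infty$ via Proposition~\ref{P-converge}. All of that matches.

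There is, however, one genuine gap at the very end, and you half-noticed it yourself. The paper's definition of ``$D$ is harmonic'' is $H=0$, not merely $\Div^\perp H=0$. Your argument delivers only that $\theta_{H_\infty}$ is a $D^\perp$-harmonic $1$-form on each leaf ($\delta^\perp\theta_{H_\infty}=0$ and $d^\perp\theta_{H_\infty}=0$), and you correctly observe that this forces $H_\infty=0$ only when $H^1(L^\perp,\RR)=0$ --- which is \emph{not} among the hypotheses of Theorem~\ref{T-02} (it is the extra hypothesis of Theorem~\ref{T-03}, whose conclusion is accordingly weaker). The missing ingredient is the global integral formula of Lemma~\ref{L-intDH}: on the closed manifold $M$ one has $\int_M(\Div^\perp H_t)\,d\vol_t=\int_M g_t(H_t,H_t)\,d\vol_t\ge0$. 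Since $\Div^\perp H_t\to0$ uniformly and exponentially, and the volume forms stay uniformly equivalent, the left-hand side tends to $0$, hence $\int_M\|H_t\|^2\,d\vol_t\to0$ and $H_\infty=0$. This step uses that $H_t$ is the mean curvature of $D$ inside the ambient closed manifold (the divergence theorem on $M$), not leafwise Hodge theory, and it is what rules out a nonzero harmonic limit representing a nontrivial class in $H^1(L^\perp,\RR)$. With that one lemma inserted, your proof closes.
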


\begin{example}\rm
Let the leaves of integrable distribution $D^\perp$ be flat tori $T^p$.
Any differential $1$-form on $T^p$ can be written as $\omega=\sum_i\omega_i dx^i$.
The form $\omega$ is harmonic if and only if the functions $\omega_i$ are harmonic, and therefore constant.
In this case, the vector field dual to $\omega$ is constant.
Indeed, the space of constant vector fields on a flat torus $T^p$ is isomorphic to $H^1(T^p,\RR)\simeq\RR^p$.
\end{example}

A foliation, whose normal plane field is umbilical, is locally conformally equivalent to a Riemannian foliation, see \cite{M1983}. The next corollary completes this fact.

\begin{corollary}\label{C-umbilic}
Under the assumptions of Theorem~\ref{T-02}, let  $D$ be $g_0$-umbilical.
Then $D$ is $g_\infty$-totally geodesic ($g_\infty$ is the limit of metrics (\ref{eq1}) as~$t\to\infty$).
\end{corollary}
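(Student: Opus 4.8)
The plan is to combine two features of the flow (\ref{eq1}). First, (\ref{eq1}) deforms $g$ only conformally \emph{along} $D$, and such a deformation preserves umbilicity of $D$ --- this is assertion (ii) of the Introduction. Second, by Theorem~\ref{T-02} the limit metric $g_\infty$ makes $D$ harmonic, so $H_\infty=0$. Consequently at $t=\infty$ the distribution $D$ is at the same time umbilical and harmonic, and these two conditions together force $b_\infty=0$, i.e. $D$ is $g_\infty$-totally geodesic.

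For the first point I would argue as follows. The right-hand side of (\ref{eq1}) is a symmetric $(0,2)$-tensor supported on $D$, so the solution keeps $D$ and $D^\perp$ mutually orthogonal, keeps $g_t|_{D^\perp}=g_0|_{D^\perp}$, and satisfies $g_t|_D=e^{2u_t}\,g_0|_D$, where $u_0=0$ and $\partial_t u_t=-\frac1n\Div^\perp\!H_t$. By the conformal deformation formula for the second fundamental tensor of $D$ developed in Section~\ref{subsec:tvar} (a Koszul-formula computation), $b_t=e^{2u_t}b_0-(\nabla^\perp u_t)^{\sharp}\cdot g_t|_D$, that is, $b_t$ differs from $e^{2u_t}b_0$ only by a term proportional to $g_t|_D$. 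A short check (taking traces in a $g_t$-orthonormal frame) then gives $H_t=H_0-n(\nabla^\perp u_t)^{\sharp}$, and therefore the trace-free part $b_t^{\circ}:=b_t-\frac1n H_t\cdot g_t|_D$ satisfies $b_t^{\circ}=e^{2u_t}\,b_0^{\circ}$, the pure-trace correction cancelling exactly. Since ``$D$ is $g_0$-umbilical'' means precisely $b_0^{\circ}\equiv0$, we conclude $b_t^{\circ}\equiv0$, i.e. $D$ is $g_t$-umbilical for every $t\ge0$.

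Finally, by Theorem~\ref{T-02} the metrics $g_t$ converge in the $C^\infty$-topology as $t\to\infty$ to $g_\infty$ for which $D$ is harmonic, i.e. $H_\infty=\tr_{\!g_\infty}b_\infty=0$. Since the assignment $g\mapsto b^{\circ}$ is continuous in the $C^\infty$-topology and $b_t^{\circ}\equiv0$ for all finite $t$, we obtain $b_\infty^{\circ}\equiv0$; thus $n\,b_\infty=H_\infty\cdot g_\infty|_D=0$, proving the Corollary. The only point requiring genuine care is the preservation of umbilicity along the \emph{whole} flow (not merely infinitesimally), and this is exactly where one uses the precise deformation formula from Section~\ref{subsec:tvar}: its essential feature is that the conformal-along-$D$ correction to $b$ is purely a multiple of $g_t|_D$, so it drops out of $b^{\circ}$. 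Passing to the limit is then immediate from the smoothness and $C^\infty$-convergence asserted in Theorem~\ref{T-02}.
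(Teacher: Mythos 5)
Your proposal is correct and follows the same overall skeleton as the paper's proof: Theorem~\ref{T-02} gives $H_\infty=0$, the flow preserves umbilicity of $D$, and an umbilical distribution with vanishing mean curvature is totally geodesic. The one place where you diverge is the mechanism for the middle step. The paper invokes Proposition~\ref{P-08b}, which establishes preservation of umbilicity by applying the existence/uniqueness theorem for ODEs to the evolution equation (\ref{E-S-b}) for $b_t$ --- an infinitesimal, somewhat abstract argument. You instead integrate the flow explicitly, writing $\hat g_t=e^{2u_t}\hat g_0$ with $\partial_t u_t=-\frac1n\Div^\perp H_t$, and then apply the finite conformal-change formula (\ref{E-Ak-U}) of Lemma~\ref{L-btAt} to obtain the exact identity $b_t^{\circ}=e^{2u_t}b_0^{\circ}$ for the trace-free part. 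This is arguably cleaner and more informative: it shows not just that umbilicity is preserved but that the umbilicity defect scales by the conformal factor, and it avoids any appeal to ODE uniqueness. You are also more careful than the paper about the passage to the limit, noting explicitly that $b^{\circ}$ depends continuously on $g$ in the $C^\infty$-topology; the paper leaves this implicit. Both arguments are valid and rest on the same underlying fact, namely that a $D$-conformal change corrects $b$ only by a multiple of $\hat g$.
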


A foliation with vanishing mean curvature is called \textit{harmonic}.
Every leaf of such foliation is a minimal submanifold of $M$.
A foliation $\calf$ is \textit{taut} if there is at least one metric on $M$ for which $\calf$ is harmonic.
In particular, if there is an immersed closed transversal manifold that intersects each leaf, then $\calf$ is taut
(see \cite{rsw} and survey in \cite{cc1}). For example, a Reeb foliation on $S^3$ is not taut.
 The known proofs of existence of  ``taut" metrics use the Hahn--Banach Theorem and are not constructive.
Corollary~\ref{C-minimal} (of Theorem~\ref{T-02}) shows how to produce  in some cases
a family of metrics converging to the metric for which $\calf$ is harmonic (i.e., $H_\infty=0$).
Certain results for codimension-one foliations are given in Section~\ref{sec:more}.

\begin{corollary}\label{C-minimal}
Let $\calf $ be a foliation on $(M,g)$ tangent to $D$ of codimension $p>1$.
Suppose that the leaves of ${D^\perp}$ compose a fibration $L^\perp\overset{i}\hookrightarrow M\overset{\pi}\to B$,
and the equality $d^\perp\theta_{H_0}=0$ is satisfied.
Then (\ref{eq1}) admits a unique solution $g_t\ (t\ge0)$,
converging in $C^\infty$-topology as $t\to\infty$ to a Riemannian metric $g_\infty$, for which $\calf$ is harmonic.
\end{corollary}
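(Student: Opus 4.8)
The plan is to obtain Corollary~\ref{C-minimal} directly from Theorem~\ref{T-02}, by reading the latter through the dictionary between an integrable distribution and the foliation it defines; essentially no new analysis is required. First I would recall that when $D$ is integrable it is the tangent distribution of a foliation $\calf$, and the second fundamental tensor $b$ of $D$ restricts on each leaf to the ordinary second fundamental form of that leaf; hence its trace $H=\tr_{\!g} b\in\Gamma(D^\perp)$ is exactly the mean curvature vector field of $\calf$. Consequently, with the terminology fixed in Section~\ref{sec:1-intro}, ``$D$ is harmonic'' ($H\equiv 0$) and ``$\calf$ is harmonic'' (vanishing mean curvature, equivalently every leaf of $\calf$ is a minimal submanifold of $M$) express one and the same condition. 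I would also note that Theorem~\ref{T-02} places no restriction on $D$ itself, so adjoining the hypothesis ``$D$ integrable'' does not interfere with it.

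Next I would verify that the hypotheses of Theorem~\ref{T-02} hold with $g_0=g$. The assumption that the leaves of $D^\perp$ compose a fibration $L^\perp\overset{i}\hookrightarrow M\overset{\pi}\to B$ is shared verbatim, and in particular those leaves are compact and orientable. Since the codimension $p=\dim D^\perp$ satisfies $p>1$, Theorem~\ref{T-02} additionally requires the $1$-form $\theta_H$ on the leaves of $D^\perp$ dual to $H$ to be $d^\perp$-closed for the initial metric, which is precisely the standing hypothesis $d^\perp\theta_{H_0}=0$; here I would recall that this closedness is propagated along the flow, which is part of the content of Theorem~\ref{T-02} (the $1$-form $\theta_{H_t}$ obeys the heat equation along $D^\perp$, and the heat flow preserves $d^\perp$-closed forms). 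The excluded case $p=1$ is treated separately in Section~\ref{sec:more}, see Proposition~\ref{C-2ntau1}.

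With these checks in place, I would invoke Theorem~\ref{T-02}: the EGF (\ref{eq1}) with $g_0=g$ has a unique smooth solution $g_t$ for all $t\ge 0$, and $g_t$ converges in the $C^\infty$-topology as $t\to\infty$ to a metric $g_\infty$ for which $D$ is harmonic, i.e.\ $H_\infty=0$. Since the convergence holds in $C^\infty$, the mean curvature vector (which depends on first derivatives of the metric) converges as well, so $H_\infty=0$ is a genuine statement about $(M,g_\infty)$. Translating it back via the first paragraph, the foliation $\calf$ has vanishing mean curvature for $g_\infty$, hence every leaf of $\calf$ is a minimal submanifold of $(M,g_\infty)$; that is, $\calf$ is harmonic with respect to $g_\infty$. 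In particular $\calf$ is taut, and, unlike in the known Hahn--Banach existence arguments, the taut metric $g_\infty$ arises here as the limit of an explicit parabolic flow. This would complete the argument.

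I do not expect a real obstacle, because the analytic substance --- short-time and long-time existence of the solution, the heat equation satisfied by $\theta_{H_t}$ along the leaves of $D^\perp$, the persistence of $d^\perp\theta_{H_t}=0$, and the $C^\infty$ convergence together with the identification of the limit --- is entirely contained in Theorem~\ref{T-02}. The only point that genuinely needs care is bookkeeping: confirming that the mean curvature vector of $D$ and the mean curvature vector of $\calf$ are literally the same object under the integrability assumption, and that the codimension restriction $p>1$ in the corollary matches the $p>1$ clause of Theorem~\ref{T-02}, so that $d^\perp\theta_{H_0}=0$ is indeed the right hypothesis to impose.
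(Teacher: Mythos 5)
Your proposal is correct and follows essentially the same route as the paper: both reduce the corollary to a direct application of Theorem~\ref{T-02} (whose hypotheses are verbatim those of the corollary), then identify ``$D$ harmonic'' with ``$\calf$ harmonic'' via $H_\infty=0$. The paper's proof adds only the brief observation that the compact leaves of $D^\perp$ serve as closed cross-sections, so $\calf$ is taut --- a remark you also make in passing.
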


Let $\calf$ be a foliation of any codimension of a closed manifold $M$ and $X$
be a vector field on $M$. Recently, P. Schweitzer and P. Walczak \cite{sw2004} provided some necessary and sufficient conditions for $X$ to become the mean curvature vector of $\calf$ with respect to some Riemannian metric on $M$.

\vskip1mm
Extending the definition of EGF and method of Theorem~\ref{T-02}, we show how to produce in some cases (e.g., Theorem~\ref{T-03}) a one-parameter family of metrics converging to the metric with prescribed mean curvature vector field of $\calf$.

\begin{theorem}\label{T-03}
Let $p>1$ and the leaves of ${D^\perp}$ compose a fibration $L^\perp\overset{i}\hookrightarrow M\overset{\pi}\to B$.
Then for any smooth vector field $X$ on $M$ orthogonal to $\calf $ and satisfying
$d^\perp\theta_{H-X}=0$, the PDE
\begin{equation}\label{eq1-X}
 \dt g_t=-\frac 2n\Div^\perp(H_t - X)\,\hat g_t,\qquad g_0=g
\end{equation}
admits a unique solution $g_t\ (t\ge0)$, converging in $C^\infty$-topology as $t\to\infty$ to a Riemannian metric $g_\infty$ with $D^\perp$-harmonic 1-form $\theta_{H_\infty-X}$.
If $H^1(L^\perp,\RR)=0$ for the leaves of $D^\perp$, then $H_\infty=X$.
\end{theorem}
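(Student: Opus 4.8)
The plan is to reduce \eqref{eq1-X} to the same heat-flow analysis that proves Theorem~\ref{T-02}, using the auxiliary vector field $X$ as a fixed ``target.'' First I would observe that the key structural facts underlying Theorem~\ref{T-02} survive the modification: since $g_t$ deforms only conformally along $D$ and is $t$-independent on $D^\perp$, the leafwise operators $\nabla^\perp$, $d^\perp$, $\delta^\perp$ and the leafwise Laplacian $\Delta^\perp$ do not depend on $t$, and $X$ is a fixed smooth section of $D^\perp$ (viewing $D^\perp$ as the normal bundle of $\calf$). Hence $\theta_X$ is a fixed $t$-independent $1$-form on each leaf of $D^\perp$. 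Using the deformation formulae of Section~\ref{subsec:tvar}, one computes the evolution of $\theta_{H_t}$ under \eqref{eq1-X}; as in the proof of Theorem~\ref{T-02} the conformal factor is designed precisely so that $\dt \theta_{H_t} = -\Delta^\perp\big(\theta_{H_t}-\theta_X\big)$ on each leaf, i.e., setting $\eta_t := \theta_{H_t}-\theta_X$, the $1$-form $\eta_t$ satisfies the leafwise heat equation $\dt\eta_t = -\Delta^\perp\eta_t$ with initial data $\eta_0 = \theta_{H_0 - X}$.

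Next I would invoke the heat-flow theory for $1$-forms recalled in the Appendix (Section~\ref{sec:1-prelim}), applied on the compact orientable leaves $L^\perp$ of the fibration $L^\perp \hookrightarrow M \to B$. The hypothesis $d^\perp\theta_{H-X}=0$ guarantees that the initial form $\eta_0$ is closed; since the heat semigroup on $1$-forms commutes with $d^\perp$, the form $\eta_t$ stays closed for all $t$, and (because $p>1$) the standard parabolic estimates give a unique smooth solution for all $t\ge0$ converging in $C^\infty$ to the harmonic part of $\eta_0$, i.e. to a $D^\perp$-harmonic $1$-form $\eta_\infty$. Reconstructing $g_t$ from $\theta_{H_t} = \eta_t + \theta_X$ by integrating the conformal factor — exactly as in Theorem~\ref{T-02}, using fibration compactness to control the factor uniformly along leaves and smoothness in the base directions — yields the unique smooth global solution $g_t$ of \eqref{eq1-X} and its $C^\infty$-convergence to a metric $g_\infty$ with $\theta_{H_\infty - X} = \eta_\infty$ being $D^\perp$-harmonic. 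Finally, if $H^1(L^\perp,\RR)=0$ then every harmonic $1$-form on a leaf vanishes, so $\eta_\infty = 0$, i.e. $H_\infty = X$.

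The main obstacle I anticipate is the same one as in Theorem~\ref{T-02}: verifying that the leafwise heat flow for $\eta_t$ actually integrates back to a \emph{genuine global metric} $g_t$ on $M$ that is smooth jointly in the leaf and base variables, and that the limit $g_\infty$ is a smooth Riemannian metric rather than merely a leafwise-defined object. This requires the fibration hypothesis (to trivialize locally and obtain uniform-in-$B$ parabolic estimates) and a careful check that the conformal factor recovered from $\theta_{H_t}$ stays positive and bounded; the role of $X$ here is benign, since it only shifts the heat equation by a fixed closed form, but one must confirm that the compatibility condition relating $\Div^\perp(H_t-X)$ to the conformal factor is preserved under the flow, which is where the deformation formulae of Section~\ref{subsec:tvar} and the constraint $d^\perp\theta_{H-X}=0$ are used in tandem.
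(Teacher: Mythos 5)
Your proposal follows essentially the same route as the paper: shift by the fixed form $\theta_X$, show $\eta_t=\theta_{H_t}-\theta_X$ obeys the leafwise equation $\dt\eta_t=-d^\perp\delta^\perp\eta_t$, identify this with the Hodge heat flow of Theorem~B using the closedness hypothesis $d^\perp\theta_{H-X}=0$ (preserved under the flow), and then reconstruct $g_t$ and its $C^\infty$-limit exactly as in Theorem~\ref{T-02}, with $H^1(L^\perp,\RR)=0$ forcing $\eta_\infty=0$. The argument is correct and matches the paper's proof in all essentials.
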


\begin{example}\rm
(a) By Bochner theorem (see \cite{jj2011}), if the Ricci curvature of any leaf $L^\perp$ of $D^\perp$ is
non-negative everywhere and positive for some point then $H^1(L^\perp,\RR)=0$ (see Theorem~\ref{T-03}).

\vskip1mm
(b) The following example (communicated to authors by P. Walczak) shows us that the condition $d^\perp\theta_{H-X}=0$
(see Theorem~\ref{T-03}) and the assumption $d^\perp\theta_{H}=0$ (see Theorem~\ref{T-02} and Corollary~\ref{C-minimal}) are needed. Let $X$ be a divergence free (e.g., a Killing) vector field on the leaves $S^p$ of the product $M=M_1\times S^p$ of a unit $p$-sphere and a Riemannian manifold $(M_1,g_1)$. Let the distribution $D$ on $M$ corresponds to $TM_1$.
Then the product metric $g$ has the mean curvature $H=0$, and $g$ is a fixed point of the dynamical system (\ref{eq1-X}).
Consequently, $H_t = 0$ for all $t\ge0$ and $H_\infty = 0\ne X$.
\end{example}

\section{$D$-conformal variations of geometric quantities}
\label{subsec:tvar}

In this section we develop formulae for deformations of geometric quantities as the Riemannian metric varies conformally along one of the distributions.

\subsection{Preliminaries}
\label{subsec:prel}

Denote by $\mathcal{M}$ the space of smooth Riemannian metrics of finite volume on $M$ such that ${D^\perp}$ is orthogonal to $D$.
Elements of $\mathcal{M}$ are called $(D,D^\perp)$-\textit{adapted metrics}.
Let $\mathcal{M}_1\subset\mathcal{M}$ be the subspace of $(D,D^\perp)$-adapted metrics of unit volume, and
 $\pi:\mathcal{M}\to \mathcal{M}_1$,
 where $\pi(g)=\bar g=\big(\vol(M,g)^{-2/n}\,\hat g\big)\oplus g^\perp$
be the $D$-conformal projection.
Let $g_t\in\mathcal{M}$ (with $0\le t<\eps$) be a family of metrics with $g_0$ of unit volume.
Consider the $D$-truncated tensor field
\[
 S_t=\dt g_t.
\]
Recall that the \textit{musical isomorphism}\index{isomorphism (musical)!$\sharp$} $\sharp:T^*M\to TM$ sends a covector
$\omega=\omega_i dx^i$ to $\omega^\sharp =\omega^i\partial_i=g^{ij}\omega_j\partial_i$,
and $\flat: TM\to T^*M$\index{isomorphism (musical)!$\flat$} sends a vector $X=X^i\partial_i$ to $X^\flat=X_idx^i=g_{ij}X^jdx^i$.
We denote by $S^\sharp$ the $(1,1)$-tensor field on $M$ which is $g$-dual to a symmetric $(0,2)$-tensor $S$,
$$
 S(X,Y)=g(S^\sharp(X), Y)\quad \mbox{\rm for all vectors } \ X,Y.
$$
The volume form $\vol_t$ of $g_t$ evolves as
 $\frac{d}{dt}\vol_t=\frac12\,(\tr S^\sharp)\vol_t$, see~\cite{to}.
If $S_t=s_t\,\hat g_t$ with $s_t:M\to\RR$ (i.e., $g_t$ are conformally equivalent along $D$
and $\hat g_t$ is the $D$-truncated metric $g_t$) then
\begin{equation}\label{E-dtdvol}
 \frac{d}{dt}\vol_t=\frac n2\,s_t\vol_t.
\end{equation}
Hence, the metrics $\tilde g_t=(\phi_t\hat g_t)\oplus g^\perp_t$
with dilating factors $\phi_t=\vol(M,g_t)^{-2/n}$, belong to $\mathcal{M}_1$.

 Recall that the \textit{Levi-Civita connection} $\nabla^t$ of a metric $g_t$ on $M$ is given by
\begin{eqnarray}\label{eqlevicivita}
\nonumber
 2\,g_t(\nabla^t_X Y, Z) \eq X(g_t(Y,Z)) + Y(g_t(X,Z)) - Z(g_t(X,Y)) \\
 \plus g_t([X, Y], Z) - g_t([X, Z], Y) - g_t([Y, Z], X)
\end{eqnarray}
for all vector fields $X, Y$ and $Z$ on $M$.
 Since the difference of two connections is always a tensor, $\Pi_t:=\dt\nabla^t$ is a $(1,2)$-tensor field on $(M, g_t)$.
Differentiation (\ref{eqlevicivita}) w.\,r.\,t. $t$ yields the formula, see~\cite{to},
\begin{equation}\label{eq2}
 g_t(\Pi_t(X, Y), Z)=\frac12\big[(\nabla^t_X S_t)(Y,Z)+(\nabla^t_Y S_t)(X,Z)-(\nabla^t_Z S_t)(X,Y)\big]
\end{equation}
for all $X,Y,Z\in TM$.
Indeed, if the vector fields $X=X(t),\,Y=Y(t)$ are $t$-dependent, then
\begin{equation}\label{eq2time}
 \dt\nabla^t_X Y = \Pi_t(X, Y) + \nabla_X(\dt Y)+ \nabla_{\dt X} Y.
\end{equation}
Notice the symmetry $\Pi_t(X, Y)=\Pi_t(Y, X)$ of the tensor $\Pi_t$.

We will use the following condition for convergence of evolving metrics (see \cite[Appendix~A]{bre}).

\begin{proposition}\label{P-converge}
 Let $\dt g_t=s_t\,\hat g_t\ (t\ge0)$ be a one-parameter family of
 Riemannian metrics on a closed manifold $M$ with complementary distributions $D$ and $D^\perp$.
 Define functions $u_m(t)=\sup_M |(\nabla^t)^m s_t|_{g(t)}$ and assume that $\int_0^\infty u_m(t)\,dt<\infty$ for all $m\ge0$.
 Then, as $t\to\infty$, the metrics $g_t$ converge in $C^\infty$-topology to a smooth Riemannian metric $g_\infty$.
\end{proposition}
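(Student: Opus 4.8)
The plan is to exploit the $D$-conformal form of the right-hand side, which makes the flow explicitly solvable in terms of a single scalar function, and then to bootstrap from a $C^0$ estimate to a $C^\infty$ estimate on that scalar. First, since $\hat g_t(\xi,\cdot)=0$ for $\xi\in D^\perp$ and $g_0$ is $(D,D^\perp)$-adapted, the equation $\dt g_t=s_t\,\hat g_t$ keeps the $D^\perp$-block and the mixed block of $g_t$ constant, so each $g_t$ stays adapted with $g_t^\perp\equiv g_0^\perp$; on $D$ it reads $\dt (g_t)_{ab}=s_t\,(g_t)_{ab}$ in a $t$-independent local frame of $D$, whence
\[
 g_t=e^{\sigma_t}\,\hat g_0\oplus g_0^\perp,\qquad \sigma_t(x)=\int_0^t s_\tau(x)\,d\tau .
\]
Thus the entire flow is encoded by the scalar $\sigma_t$, and it suffices to show that $\sigma_t$ converges in $C^\infty(M)$; the limit metric will then be $g_\infty=e^{\sigma_\infty}\hat g_0\oplus g_0^\perp$.

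Next I would establish the $C^0$ estimate. Because $s_t$ is a scalar we have $|s_t|_{g_t}=|s_t|$, so $u_0(t)=\sup_M|s_t|$ and $\sup_M|\sigma_t|\le\int_0^\infty u_0(\tau)\,d\tau=:C_0<\infty$ for all $t$. Moreover the variation of $\sigma_t$ over $[t_1,t_2]$ is at most $\int_{t_1}^{t_2}u_0$, so $\{\sigma_t\}$ is uniformly Cauchy on $M$ and converges uniformly to $\sigma_\infty=\int_0^\infty s_\tau\,d\tau$. In particular $e^{-C_0}\le e^{\sigma_t}\le e^{C_0}$, so all $g_t$ are uniformly equivalent to $g_0$, $\det g_t$ is bounded away from $0$ uniformly in $t$, and $g_\infty$ is a genuine positive-definite metric.

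The core step is the bootstrap. Fix a finite coordinate atlas of $M$; I claim $\int_0^\infty\sup_M|\partial^\alpha s_t|\,dt<\infty$ for every multi-index $\alpha$, proved by induction on $m=|\alpha|$ with the case $m=0$ the $C^0$ estimate. In the inductive step, integrating the case $m-1$ in $t$ gives $\sup_t\sup_M|\partial^\beta\sigma_t|<\infty$ for $|\beta|\le m-1$; combined with the explicit form of $g_t$ and smoothness of $g_0$ this yields $\sup_t\|g_t\|_{C^{m-1}}<\infty$ and (using the lower bound on $\det g_t$) $\sup_t\|g_t^{-1}\|_{C^{m-1}}<\infty$, hence $\sup_t\|\Gamma^t\|_{C^{m-2}}<\infty$ for the Christoffel symbols of $g_t=\nabla^t$. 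Since $(\nabla^t)^m$ is linear on functions, expanding it in coordinates gives, for $|\alpha|=m$,
\[
 \partial^\alpha s_t=\big((\nabla^t)^{m} s_t\big)_{\alpha}+\sum (\partial^{a_1}\Gamma^t)\cdots(\partial^{a_k}\Gamma^t)\,\partial^{b}s_t ,
\]
where $(\cdot)_\alpha$ is the $\alpha$-component and each term on the right has $a_i\le m-2$ and $b\le m-1$. Converting $g_t$-norms into coordinate components via the uniform bounds on $g_t,g_t^{-1}$, and using the uniform bound on $\|\Gamma^t\|_{C^{m-2}}$, I obtain a pointwise-in-$t$ estimate $\sup_M|\partial^\alpha s_t|\le C_m\big(u_m(t)+\sum_{b\le m-1}\sup_M|\partial^{b}s_t|\big)$ with $C_m$ independent of $t$. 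Integrating over $t\in[0,\infty)$ and using $\int_0^\infty u_m<\infty$ together with the induction hypothesis closes the induction.

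Finally, the claim gives $\partial^\alpha\sigma_t=\int_0^t\partial^\alpha s_\tau\,d\tau$ converging uniformly on $M$ for every $\alpha$, so $\sigma_t\to\sigma_\infty$ in $C^\infty(M)$, hence $g_t\to g_\infty$ in the $C^\infty$-topology and $g_\infty$ is smooth. I expect the inductive step to be the main obstacle: the hypotheses control derivatives taken with the moving connection $\nabla^t$, whereas $C^\infty$ convergence concerns fixed coordinate derivatives, and the induction must be organized so that controlling the lower-order coordinate derivatives of $s_t$ first produces uniform control of $g_t$ — and hence of $\Gamma^t$ — in a fixed chart, which is exactly what allows the covariant bounds at order $m$ to be transferred without loss. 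One must also be careful that only integrability (not uniform boundedness) of the $u_m$ is assumed, so every estimate has to be kept at the level of time-integrals rather than pointwise in $t$.
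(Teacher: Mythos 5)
Your proof is correct, but it is organized quite differently from the paper's. The paper's proof is essentially a two-line appeal to the general convergence criterion of \cite[Appendix~A]{bre}: the integrability of the $u_m$ is taken to guarantee $C^\infty$-convergence of $g_t$ to a symmetric $(0,2)$-tensor for an \emph{arbitrary} deformation, and the only added observation is the uniform equivalence $c^{-1}\hat g_0\le\hat g_t\le c\,\hat g_0$, which forces the limit to be positive definite. You instead exploit the special $D$-conformal structure of the right-hand side to integrate the flow explicitly as $g_t=e^{\sigma_t}\hat g_0\oplus g_0^\perp$ with $\sigma_t=\int_0^t s_\tau\,d\tau$, reduce everything to $C^\infty$-convergence of the single scalar $\sigma_t$, and then carry out by hand the bootstrap that converts the hypotheses on $|(\nabla^t)^m s_t|_{g_t}$ into bounds on fixed coordinate derivatives $\partial^\alpha s_t$. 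That inductive step is the genuine analytic content hidden inside the cited lemma, and you handle its one delicate point correctly: the lower-order coordinate bounds on $s_t$ must first be integrated in $t$ to control $g_t$, $g_t^{-1}$ and hence $\Gamma^t$ uniformly in a fixed chart before the order-$m$ covariant bound can be transferred, and all estimates must be kept at the level of time-integrals since only integrability (not boundedness) of the $u_m$ is assumed. The paper's route is shorter and applies to general deformations; yours is self-contained, makes the uniform equivalence and positive definiteness of $g_\infty$ completely transparent, and gives an explicit formula for the limit metric.
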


\begin{proof} Our assumptions ensure that $g_t$ converge in $C^\infty$-topology to a symmetric $(0,2)$-tensor $g_\infty$.
The metrics are uniformly equivalent: $c^{-1}\hat g_0\le \hat g_t\le c\,\hat g_0$ for some $c>0$ and all $t\ge0$.
Hence, $g_\infty$ is positive definite.
\end{proof}

Let $\nabla^\perp\phi$ be the $D^\perp$-component of the gradient of a function $\phi\in C^1(M)$.
The second fundamental tensor of $D$ (similarly of ${D^\perp}$) is defined by
\begin{equation}\label{E-def-bT}
 b(X,Y)=\frac12\,(\nabla_X Y+\nabla_Y X)^\perp,\qquad X,Y\in D.
\end{equation}

The second fundamental forms of $D$ with respect to metrics $g$ and $\tilde g= (e^{\,2\/\phi}\hat g)\oplus g^\perp$
are related by the following lemma.

\begin{lemma}[see \cite{rw-m} for codimension-one foliations]\label{L-btAt}
Let $(M,\,g=\hat g\oplus g^\perp)$ be a Riemannian manifold with complementary orthogonal distributions $D$ and ${D^\perp}$.
Given $\phi\in C^1(M)$, define a metric $\tilde g = (e^{\,2\/\phi}\hat g)\oplus g^\perp$.
Then the second fundamental forms and the mean curvature vectors of $D$ w.\,r.\,t. $\tilde g$ and $g$ are related by
\begin{equation}\label{E-Ak-U}
 \tilde b = e^{\,2\/\phi}\big(b -(\nabla^\perp\phi)\,\hat g\big),\qquad
 \tilde H = H -(\dim D)\nabla^\perp\phi.
\end{equation}
So, if
$\phi$ is constant then, by (\ref{E-Ak-U}), we have:
$\tilde b=e^{\,2\/\phi}\,b\,$ and $\tilde H=H$.
 \end{lemma}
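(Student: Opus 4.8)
The plan is to compute the Levi--Civita connection $\tilde\nabla$ of $\tilde g$ directly from the Koszul formula~(\ref{eqlevicivita}) and then read off its $D^\perp$-component. Two elementary facts keep the computation clean: $D^\perp$ is the orthogonal complement of $D$ for $\tilde g$ exactly as for $g$ (so the $D^\perp$-valued second fundamental tensor is formed with the same projection, cf.~(\ref{E-def-bT})), and $\tilde g$ restricts to $g$ on $D^\perp$. Throughout I write $V=V^\top+V^\perp$ with $V^\top\in\Gamma(D)$, $V^\perp\in\Gamma(D^\perp)$, and I use that $Z(\phi)=g(\nabla\phi,Z)=g(\nabla^\perp\phi,Z)$ whenever $Z\in\Gamma(D^\perp)$, since $Z\perp D$.

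First I would fix $X,Y\in\Gamma(D)$ and $Z\in\Gamma(D^\perp)$ and expand~(\ref{eqlevicivita}) for $\tilde g$ at the triple $(X,Y,Z)$. Since $X,Y\perp Z$, the terms $X(\tilde g(Y,Z))$ and $Y(\tilde g(X,Z))$ vanish; the term $-Z(\tilde g(X,Y))=-Z\!\big(e^{2\phi}g(X,Y)\big)$ produces the two contributions $-2e^{2\phi}Z(\phi)\,g(X,Y)$ and $-e^{2\phi}Z(g(X,Y))$; and among the Lie-bracket terms only $[X,Y]^\perp$, $[X,Z]^\top$, $[Y,Z]^\top$ are seen by the respective pairings, giving $\tilde g([X,Y],Z)=g([X,Y],Z)$ but $\tilde g([X,Z],Y)=e^{2\phi}g([X,Z],Y)$ and likewise for $[Y,Z]$. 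Next I would symmetrize the resulting relation in $X$ and $Y$: this cancels the $[X,Y]$-term and is precisely what matches the definition~(\ref{E-def-bT}) of $b$ via $\tfrac12(\nabla_X Y+\nabla_Y X)^\perp$; comparing with the case $\phi\equiv0$ then gives $\tilde g(\tilde b(X,Y),Z)=e^{2\phi}\big(g(b(X,Y),Z)-Z(\phi)\,g(X,Y)\big)$. Replacing $Z(\phi)$ by $g(\nabla^\perp\phi,Z)$, using $\tilde g=g$ on $D^\perp$, and letting $Z$ vary over $\Gamma(D^\perp)$ yields the first identity in~(\ref{E-Ak-U}). The mean-curvature formula follows by tracing over a $\tilde g$-orthonormal frame of $D$: if $\{e_i\}$ is $g$-orthonormal in $D$ then $\{e^{-\phi}e_i\}$ is $\tilde g$-orthonormal, so $\tilde H=\sum_i\tilde b(e^{-\phi}e_i,e^{-\phi}e_i)=e^{-2\phi}\sum_i\tilde b(e_i,e_i)=H-(\dim D)\,\nabla^\perp\phi$, and the stated case of constant $\phi$ is immediate.

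The main obstacle here is organisational rather than analytic: one has to track which $D$- and $D^\perp$-components of the three Lie brackets survive the various pairings against $\tilde g$, and in particular to notice that the $D^\perp$-part of $[X,Y]$ --- nonzero in general when $D$ is non-integrable --- cancels only after symmetrising in $X,Y$, so the argument must be set up around~(\ref{E-def-bT}) from the outset. A second point to keep in mind is that the gradient in~(\ref{E-Ak-U}) is the $D^\perp$-component $\nabla^\perp\phi$, not the full $\nabla\phi$; this is forced by the fact that the covector $Z\mapsto Z(\phi)$ is only ever evaluated on $Z\in\Gamma(D^\perp)$. As an alternative I could join $g$ to $\tilde g$ by the family $g_s=(e^{2s\phi}\hat g)\oplus g^\perp$, apply the variation formula~(\ref{eq2}) with $S_s=2\phi\,\hat g_s$ to obtain a linear first-order ODE in $s$ for $g_s(b_s(X,Y),Z)$, and integrate from $s=0$ to $s=1$; this reproduces~(\ref{E-Ak-U}) and fits naturally the viewpoint of the present section.
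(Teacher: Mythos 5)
Your argument is correct and follows essentially the same route as the paper: expand the Koszul formula~(\ref{eqlevicivita}) for $\tilde g$ at $(X,Y,Z)$ with $X,Y\in D$, $Z\in D^\perp$, compare with the $\phi\equiv0$ case, symmetrize so that definition~(\ref{E-def-bT}) applies, and then trace to get $\tilde H$. Your treatment is somewhat more detailed (the paper leaves the symmetrization and the $\tilde g$-orthonormal rescaling of the frame implicit), but there is no substantive difference.
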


\begin{proof} By (\ref{eqlevicivita}), for any $X,Y\in D$ and $\xi\in D^\perp$ we have
\[
 g(\tilde\nabla_X Y,\xi) = e^{\,2\/\phi}\,g(\nabla_X Y,\xi) -e^{\,2\/\phi}\,g(X,Y)\,\xi(\phi)
 -\frac12\,(e^{\,2\/\phi}-1)\,g([X,Y], \xi).
\]
From this and definition (\ref{E-def-bT}), formula (\ref{E-Ak-U})$_1$ follows.
Since $H=\tr_{\!g}\,b$, we have (\ref{E-Ak-U})$_2$.
\end{proof}

\subsection{The integral formula}
\label{subsec:tensors}

The \textit{divergence of a vector field} $X$ on $(M,g)$ is given by
 $\Div X=\sum\nolimits_{s} g(\nabla_{e_s} X, e_s)$,
where $(e_s)$ is a local orthonormal frame on $(M,g)$.
Recall the identity for a smooth function $f:M\to\RR$,
\begin{equation*}
  \Div(f\cdot X)=f\cdot\Div X + X(f).
\end{equation*}
For a compact manifold $M$ with boundary and inner normal $n$, the \textit{Divergence Theorem} reads as
\begin{equation}\label{E-Th-div}
 \int_M \Div X\,d\vol=\int_{\partial M} g(X,n)\,d\omega.
\end{equation}
For a closed manifold $M$, we have $\int_M \Div X\,d\vol=0$.
 The $D^\perp$-\textit{divergence}, $\Div^\perp \xi$, of a vector field $\xi\in\Gamma({D^\perp})$ is defined similarly
 to $\Div\xi$, using a local orthonormal frame $(\eps_\alpha)$ of~${D^\perp}$.

\begin{lemma}\label{L-intDH}
For a vector field $\xi\in\Gamma({D^\perp})$ on a closed manifold $M$, we have the identity
\begin{equation}\label{E-divN}
 \int_M (\Div^\perp \xi)\,d\vol =\int_M g(H, \xi)\,d\vol.
\end{equation}
So, $\int_M (\Div^\perp \xi)\,d\vol=0$ for any vector field $\xi\in\Gamma({D^\perp})$ if and only if $H=0$.
\end{lemma}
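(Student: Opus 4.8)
The plan is to reduce identity \eqref{E-divN} to an application of the Divergence Theorem on the closed manifold $M$ by computing the ordinary divergence $\Div\xi$ of the vector field $\xi\in\Gamma(D^\perp)$ and relating it to $\Div^\perp\xi$ and to $g(H,\xi)$. First I would fix a point $x\in M$ and choose a local orthonormal frame $(e_1,\dots,e_n,\eps_1,\dots,\eps_p)$ of $TM$ adapted to the splitting $TM=D\oplus D^\perp$, with $(e_i)$ a frame of $D$ and $(\eps_\alpha)$ a frame of $D^\perp$. Writing out $\Div\xi=\sum_i g(\nabla_{e_i}\xi,e_i)+\sum_\alpha g(\nabla_{\eps_\alpha}\xi,\eps_\alpha)$, the second sum is by definition $\Div^\perp\xi$ (since $\nabla^\perp_{\eps_\alpha}\xi$ differs from $\nabla_{\eps_\alpha}\xi$ only by a $D$-component, which is orthogonal to $\eps_\alpha$), so the only work is to identify $\sum_i g(\nabla_{e_i}\xi,e_i)$.

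The key step is the computation $\sum_i g(\nabla_{e_i}\xi,e_i)=-\sum_i g(\xi,\nabla_{e_i}e_i)=-g(\xi,\tr_g b)=-g(\xi,H)$, where the first equality uses $g(\xi,e_i)\equiv 0$ (differentiate along $e_i$ and use metric compatibility of $\nabla$), and the second uses that the $D^\perp$-component of $\sum_i\nabla_{e_i}e_i$ is exactly $\sum_i b(e_i,e_i)=\tr_g b=H$ by the definition \eqref{E-def-bT} of the second fundamental tensor (the symmetrization in \eqref{E-def-bT} is irrelevant on the diagonal). Combining, we get the pointwise identity
\begin{equation*}
 \Div\xi=\Div^\perp\xi-g(H,\xi).
\end{equation*}
Integrating over the closed manifold $M$ and applying $\int_M\Div\xi\,d\vol=0$ from \eqref{E-Th-div} (or rather its closed-manifold consequence stated just after it) yields \eqref{E-divN} immediately.

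For the final assertion, the ``if'' direction is trivial: if $H=0$ then the right-hand side of \eqref{E-divN} vanishes for every $\xi$. For the ``only if'' direction I would argue by contradiction, or directly: suppose $\int_M(\Div^\perp\xi)\,d\vol=0$ for all $\xi\in\Gamma(D^\perp)$; then by \eqref{E-divN}, $\int_M g(H,\xi)\,d\vol=0$ for all $\xi\in\Gamma(D^\perp)$, and taking $\xi=H$ (which is a smooth section of $D^\perp$) gives $\int_M|H|^2\,d\vol=0$, hence $H\equiv 0$.

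I do not anticipate a serious obstacle; the main point requiring a little care is the bookkeeping in the adapted-frame computation, specifically the observation that $g(\nabla_{\eps_\alpha}\xi,\eps_\alpha)=g(\nabla^\perp_{\eps_\alpha}\xi,\eps_\alpha)$ so that the $D^\perp$-part of $\Div\xi$ is genuinely $\Div^\perp\xi$, and the correct sign in passing from $g(\nabla_{e_i}\xi,e_i)$ to $-g(\xi,H)$. Everything else is a direct invocation of the Divergence Theorem.
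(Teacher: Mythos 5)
Your proposal is correct and follows essentially the same route as the paper: decompose $\Div\xi$ into its $D$- and $D^\perp$-parts in an adapted orthonormal frame, identify the $D$-part as $-g(H,\xi)$ via metric compatibility and the definition of $b$, and integrate using the Divergence Theorem. Your explicit treatment of the final ``if and only if'' claim (taking $\xi=H$) is a detail the paper leaves implicit, but the argument is the same.
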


\begin{proof}
Using the definition $H=\sum\nolimits_{i\le p} b(e_i, e_i)$, we have
\begin{equation*}
 \Div\xi-\Div^\perp\xi =\sum\nolimits_{i\le p} g(\nabla_{e_i}\xi, e_i) =-\sum\nolimits_{i\le p} g(b(e_i, e_i), \xi) =-g(H,\xi).
\end{equation*}
By the Divergence Theorem, $\int_M \Div\,\xi\,d\vol=0$, we obtain (\ref{E-divN}).
\end{proof}

\begin{remark}\rm
(i) By Lemma~\ref{L-intDH} with $\xi=H$, we have
\begin{equation}\label{E-divHt}
 \int_M (\Div^\perp H)\,d\vol =\int_M g(H, H)\,d\vol\ge0.
\end{equation}

(ii) By Lemma~\ref{L-intDH} with $\xi=\nabla^\perp f$, for a function $f\in C^2(M)$, we have
\begin{equation*}
 \int_M (\Delta^\perp f)\,d\vol =\int_M g(\nabla f, H)\,d\vol.
\end{equation*}
Here $\Delta^\perp f=\Div^\perp(\nabla^\perp f)$ is the $D^\perp$-\textit{Laplacian} of $f$.

(iii)
In analogy with the fact that \textit{on a closed connected Riemannian manifold, every harmonic function (i.e., $\Delta f=0$) is constant},
we claim:
\textit{If $\Delta^\perp f=g(\nabla f, H)$ then $\nabla^\perp f=0$}.
Indeed,
 \[
 \Div(f\nabla^\perp f) + f(H(f)-\Delta^\perp f) = g(\nabla^\perp f,\nabla^\perp f).
 \]
Using the Divergence Theorem, we obtain $\int_M g(\nabla^\perp f,\nabla^\perp f)\,d\vol=0$, and then $\nabla^\perp f=0$.
\end{remark}

\subsection{$D$-related geometric quantities}
\label{subsec:tvarb}

Let $\{e_i,\,\eps_\alpha\}\ (i\le n,\,\alpha\le p)$ be a local $g_0$-orthonormal frame on $TM$ adapted to $D$ and ${D^\perp}$.

\begin{lemma}\label{prop-Ei-a}
 Let $\{e_i\}$ be a local $g_0$-orthonormal frame of $\,D$ (on a set $U_q\subset M$), and $\dt g_t = s\,\hat g_t$.
 Suppose that $\{e_i(t)\}$ evolves according to
 \begin{equation}\label{E-frameE}
 \dt e_i(t)=-\frac12\,s\,e_i(t).
\end{equation}
 Then $\{e_i(t)\}$ is a $g_t$-orthonormal frame of $D$ on $U_q$ for all $t$.
\end{lemma}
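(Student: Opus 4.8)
The plan is to show that the symmetric matrix $\big(g_t(e_i(t),e_j(t))\big)_{i,j}$ is independent of $t$ and equals the identity at $t=0$. First I would observe that, at each fixed point of $U_q$, the evolution equation (\ref{E-frameE}) is a linear ODE whose solution is $e_i(t)=\exp\!\big(-\tfrac12\int_0^t s\,d\tau\big)\,e_i(0)$; in particular $e_i(t)$ is a positive multiple of $e_i(0)\in D$, so $e_i(t)\in D$ for all $t$ and consequently $\hat g_t(e_i(t),e_j(t))=g_t(e_i(t),e_j(t))$.

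Next I would differentiate $g_t(e_i(t),e_j(t))$ by the product rule, writing $\dt\big(g_t(e_i,e_j)\big)=(\dt g_t)(e_i,e_j)+g_t(\dt e_i,e_j)+g_t(e_i,\dt e_j)$. Substituting $\dt g_t=s\,\hat g_t$ together with $\hat g_t(e_i,e_j)=g_t(e_i,e_j)$ gives $(\dt g_t)(e_i,e_j)=s\,g_t(e_i,e_j)$, while (\ref{E-frameE}) gives $g_t(\dt e_i,e_j)=g_t(e_i,\dt e_j)=-\tfrac12\,s\,g_t(e_i,e_j)$. Adding the three terms yields $\dt\big(g_t(e_i,e_j)\big)=s\,g_t(e_i,e_j)-\tfrac12 s\,g_t(e_i,e_j)-\tfrac12 s\,g_t(e_i,e_j)=0$.

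Hence $g_t(e_i(t),e_j(t))=g_0(e_i(0),e_j(0))=\delta_{ij}$ for all $t$, which together with $e_i(t)\in D$ established above shows that $\{e_i(t)\}$ is a $g_t$-orthonormal frame of $D$ on $U_q$. There is no real obstacle: the statement follows at once from the product rule, and the only point deserving care is that the $D$-truncation in $\dt g_t=s\,\hat g_t$ is harmless here precisely because the vectors $e_i$ lie in $D$, where $\hat g_t$ and $g_t$ coincide.
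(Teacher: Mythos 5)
Your proof is correct and follows essentially the same route as the paper: differentiate $g_t(e_i(t),e_j(t))$ by the product rule, substitute $\dt g_t=s\,\hat g_t$ and (\ref{E-frameE}), and observe the terms cancel. The only addition is your explicit remark that $e_i(t)$ stays in $D$ (via the scalar ODE), which the paper leaves implicit; this is a harmless and slightly more careful version of the same argument.
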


\proof We have
\begin{eqnarray*}
 \dt(g_t(e_i, e_j))=g_t(\dt e_i(t), e_j(t)) +g_t(e_i(t), \dt e_j(t)) +(\dt g_t)(e_i(t), e_j(t))\\
 =s\,\hat g_t(e_i(t), e_j(t))-\frac12\, g_t(s\,e_i(t), e_j(t)) -\frac12\, g_t(e_i(t), s\,e_j(t))=0.\qed
\end{eqnarray*}

The following lemma is compatible with Lemma~\ref{L-btAt}.

\begin{lemma}\label{L-btAt2}
Let $g_t\in\mathcal{M}$ and $\dt g_{t}=s_{t}\,\hat g_{t}$ for some $s_{t}\in C^1(M)$.
Then the second fundamental tensor $b$, its mean curvature vector $H$ and the $D^\perp$-divergence $\Div^\perp H$ are evolved by
\begin{eqnarray}\label{E-S-b}
 \dt b(X,Y)\eq s\,b(X,Y)-\frac12\,\hat g(X,Y)\,\nabla^\perp s
 +\frac12\,(\nabla_{X} \dt Y +\nabla_{\dt X} Y+\nabla_{Y} \dt X+\nabla_{\dt Y} X)^\perp,\\
\label{E-S-H}
 \dt H \eq -\frac n2\,\nabla^\perp s,\qquad
 \dt (\Div^\perp H) = -\frac{n}2\,\Delta^\perp s,\qquad
 \dt\,\theta_H = -\frac{n}2\,d^\perp s.
\end{eqnarray}
\end{lemma}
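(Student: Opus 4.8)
The plan is to differentiate the defining identity~(\ref{E-def-bT}), $b(X,Y)=\tfrac12(\nabla^t_X Y+\nabla^t_Y X)^\perp$, in $t$, allowing $X,Y$ to be $t$-dependent sections of $D$. Since the orthogonal splitting $TM=D\oplus D^\perp$ is fixed (only the metric on $D$ moves), the projection onto $D^\perp$ commutes with $\dt$; combining~(\ref{eq2time}) with the symmetry $\Pi_t(X,Y)=\Pi_t(Y,X)$ gives
\[
 \dt b(X,Y)=\Pi_t(X,Y)^\perp+\tfrac12\big(\nabla_X\dt Y+\nabla_{\dt X}Y+\nabla_Y\dt X+\nabla_{\dt Y}X\big)^\perp .
\]
So the whole content of~(\ref{E-S-b}) amounts to the identity $\Pi_t(X,Y)^\perp=s\,b(X,Y)-\tfrac12\,\hat g(X,Y)\,\nabla^\perp s$ for $X,Y\in D$, which I would establish first.

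To prove it I would pair~(\ref{eq2}) with an arbitrary $\xi\in D^\perp$ and use repeatedly that $\hat g_t$ annihilates any argument lying in $D^\perp$. Then $(\nabla^t_\xi S_t)(X,Y)$ reduces, by compatibility of $\nabla^t$ with $g_t$, to $\xi(s)\,g_t(X,Y)$, whereas $(\nabla^t_X S_t)(Y,\xi)=-s\,g_t(Y,\nabla^t_X\xi)=s\,g_t\big((\nabla^t_X Y)^\perp,\xi\big)$ and likewise with $X$ and $Y$ interchanged. Using $(\nabla^t_X Y)^\perp+(\nabla^t_Y X)^\perp=2b(X,Y)$ and $\xi(s)=g_t(\nabla^\perp s,\xi)$, this gives $g_t(\Pi_t(X,Y),\xi)=g_t\big(s\,b(X,Y)-\tfrac12\,\hat g_t(X,Y)\,\nabla^\perp s,\ \xi\big)$ for every $\xi\in D^\perp$, which is the asserted expression for $\Pi_t(X,Y)^\perp$; substituting it into the display above yields~(\ref{E-S-b}).

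For~(\ref{E-S-H}) I would write $H=\sum_{i=1}^n b(e_i(t),e_i(t))$ using the $g_t$-orthonormal frame $\{e_i(t)\}$ of $D$ furnished by Lemma~\ref{prop-Ei-a}, i.e. $\dt e_i=-\tfrac12 s\,e_i$. Plugging $X=Y=e_i$ and $\dt X=\dt Y=-\tfrac12 s\,e_i$ into~(\ref{E-S-b}), the derivative terms collapse to $\big(\nabla_{e_i}\dt e_i+\nabla_{\dt e_i}e_i\big)^\perp=-s\,b(e_i,e_i)$ (the tangential piece $-\tfrac12 e_i(s)\,e_i$ has zero $D^\perp$-component), and this cancels the $s\,b(e_i,e_i)$ coming from $\Pi_t$, leaving $\dt b(e_i,e_i)=-\tfrac12\nabla^\perp s$; summing over $i$ gives $\dt H=-\tfrac n2\nabla^\perp s$. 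Finally, because $\dt g_t$ has no component on $D^\perp$ nor on mixed pairs, the induced metric on $D^\perp$ is $t$-independent, hence so are the operators $\Div^\perp$, $\Delta^\perp=\Div^\perp\!\circ\nabla^\perp$, $d^\perp$ and the duality $\xi\mapsto\theta_\xi$; they commute with $\dt$, and applying them to $\dt H=-\tfrac n2\nabla^\perp s$ (together with $\Div^\perp\nabla^\perp s=\Delta^\perp s$ and $\theta_{\nabla^\perp s}=d^\perp s$) produces the last three identities of~(\ref{E-S-H}).

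I expect the only real difficulty to be the bookkeeping in the evaluation of $\Pi_t(X,Y)^\perp$: one must track which terms of~(\ref{eq2}) survive truncation by $\hat g_t$, and observe that the surviving part of $(\nabla^t_X S_t)(Y,\xi)$, namely $-S_t(Y,\nabla^t_X\xi)$, has to be rewritten in terms of $\nabla^t_X Y$ via $g_t(Y,\nabla^t_X\xi)=-g_t(\nabla^t_X Y,\xi)$ (which follows from $g_t(Y,\xi)\equiv0$). After that, everything reduces to routine differentiation and the two cancellations described above.
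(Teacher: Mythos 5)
Your argument is correct and matches the paper's proof essentially step for step: both isolate $\Pi_t(X,Y)^\perp$ from (\ref{eq2}) together with (\ref{eq2time}) to obtain (\ref{E-S-b}), and both trace with the evolving frame of Lemma~\ref{prop-Ei-a} to get $\dt H=-\frac n2\,\nabla^\perp s$. The only (harmless) difference is in the last two identities of (\ref{E-S-H}), where you invoke the $t$-independence of $\Div^\perp$ and of the $D^\perp$-musical isomorphism (a fact the paper asserts in the introduction and which follows from the Koszul formula restricted to $D^\perp$), whereas the paper verifies $\dt(\Div^\perp H)=\Div^\perp(\dt H)$ and $\dt\,\theta_H=-\frac n2\,d^\perp s$ by direct computation with (\ref{eq2}).
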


\begin{proof}
Let $S=\dt g_{t}$ be $D$-truncated.
By (\ref{eq2}), (\ref{E-def-bT}), symmetry of $S$ and $S(\cdot, {D^\perp})=0$, we have
\begin{eqnarray*}
 g_t(\dt b(X,Y), \xi) \eq\frac12\,g_t\big(\dt(\nabla^t_X Y)+\dt(\nabla^t_Y X), \,\xi\big)\\
\nonumber
 \eq \frac12\big[(\nabla^t_X S)(Y,\xi)+(\nabla^t_Y S)(X,\xi) -(\nabla^t_{\xi} S)(X,Y)\big] +Q
\end{eqnarray*}
for all  $\xi\in{D^\perp}$ and $t$-dependent $X,Y\in D$.
Here,
 $Q:=\frac12\,g_t(\nabla^t_{X} \dt Y +\nabla^t_{\dt X} Y+\nabla^t_{Y} \dt X+\nabla^t_{\dt Y} X,\,\xi )$
due to (\ref{eq2time}).
Substituting $S=s\,\hat g$, we obtain the required (\ref{E-S-b}):
\begin{eqnarray*}
 g(\dt b(X,Y), \,\xi)\eq -\frac12\big[s\,\hat g(Y,\nabla^t_X\xi)+s\,\hat g(X,\nabla^t_Y \xi)+\xi(s)\hat g(X,Y)\big] +Q\\
 \eq s\,g(b(X,Y),\,\xi)-\frac12\,\hat g(X,Y)\,\xi(s)+Q.
\end{eqnarray*}
Let $\{e_i(t)\}$ be a local $g_t$-orthonormal frame of $D$ on $U_q$ for all $t$, hence (\ref{E-frameE}) holds,
see Lemma~\ref{prop-Ei-a}. By the above we obtain (\ref{E-S-H})$_1$ (see also alternative proof in Remark~\ref{R-altprooh}):
\begin{eqnarray*}
 \dt H \eq \sum\nolimits_i\dt b(e_i(t),e_i(t)) \\
 \eq\sum\nolimits_i\big[s\,b(e_i(t),e_i(t)) -\frac12\,g_t(e_i(t),e_i(t))\,\nabla^\perp s\big]
 -\sum\nolimits_i s\,b(e_i(t), e_i(t))=-\frac n2\,\nabla^\perp s.
\end{eqnarray*}
 To show (\ref{E-S-H})$_2$, let $S=\dt g_{t}$ be $D$-truncated (i.e., $S(D^\perp,\cdot)=0$).
 Using (\ref{eq2}), we have
\begin{eqnarray*}
 \dt (\Div^\perp H)\eq\sum\nolimits_{\alpha\le p}\dt (g(\nabla_{\eps_\alpha} H, \eps_\alpha))
 =\sum\nolimits_{\alpha\le p}\big[ (\dt g)(\nabla_{\eps_\alpha} H, \eps_\alpha) +g_t(\dt(\nabla_{\eps_\alpha} H), \eps_\alpha)\big]\\
 \eq \sum\nolimits_{\alpha\le p}\big[ S(\nabla_{\eps_\alpha} H, \eps_\alpha)
 +\frac12(\nabla_H S)(\eps_\alpha, \eps_\alpha)\big] +\Div^\perp(\dt H)
 =\Div^\perp(\dt H).
\end{eqnarray*}
Here, we used
$S(\eps_\alpha, \cdot)=0$ and
 $(\nabla_H S)(\eps_\alpha, \eps_\alpha)=H(S(\eps_\alpha, \eps_\alpha))-2\,S(\nabla_H\,\eps_\alpha, \eps_\alpha)=0$.
Now, assuming $S=s\,\hat g$, and using (\ref{E-S-H})$_1$, we obtain (\ref{E-S-H})$_2$:
$\dt (\Div^\perp H)=\Div^\perp(\dt H)=-\frac n2\,\Div^\perp(\nabla^\perp s)=-\frac{n}2\,\Delta^\perp s$.

To show (\ref{E-S-H})$_3$, we use (\ref{E-S-H})$_1$ to calculate for any $X\in D^\perp$:
\[
 \dt\theta_H(X) = \dt(g_t(H_t,X))= s\,\hat g(H_t,X)+g_t(\dt H_t, X)+g_t(H_t, \dt X)
 =-\frac n2\,g_t(\nabla^\perp s, X)+g_t(H_t, \dt X).
\]
Hence, $(\dt\theta_H)(X)
=-\frac n2\,(\nabla^\perp s)^\flat(X)
+g_t(H_t, \dt X)-\theta_H(\dt X)=-\frac n2\,d^\perp s(X)$.
\end{proof}

\begin{remark}\label{R-altprooh}\rm The alternative proof of (\ref{E-S-H})$_1$ is based on the identity
(see \cite[Lemma~2.4]{rw-m} for $k = 0$)
\begin{equation*}
 \dt(\tr_{\!g_t} B) =\tr_{\!g_t}(\dt B) -\<B, S\>_{g_t},
\end{equation*}
where $S = \dt g$, $B$ -- a $t$-dependent symmetric $(k, 2)$-tensor on $(M,g)$, and $\<B, S\>=B^{ij}S_{ij}$.

In our case, $k = 1$, $B = b$, $S = s\,\hat g_{t}$ and $\tr_{\!g_t} B = H_{t}$. Thus, using (\ref{E-S-b}), we have
\begin{eqnarray*}
 \dt(\tr_{\!g_t} B)\eq\dt H,\qquad
 \tr_{\!g_t}(\dt B)=\tr_{\!g_t}(\dt b) = s H-\frac n2\nabla^\perp s,\\
 \<B, S\>_{g_t}\eq\<b, s\,\hat g_t\>_{g_t} =s\<b_t, \hat g_t\>_{g_t} =s\tr_{\!g_t} b_t =s\,H_t.
\end{eqnarray*}
\end{remark}

Next we show that $D$-conformal variations of metrics preserve the umbilicity of~$D$.

\begin{proposition}\label{P-08b}
Let $\dt g_t=s_t\,\hat g_t\ (s_t:M\to\RR)$, be a $D$-conformal family of Riemannian metrics on a manifold $(M,D,{D^\perp})$. If $D$ is umbilical for $g_0$, then $D$ is umbilical for any $g_t$.
\end{proposition}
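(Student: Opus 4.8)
The plan is to track how the umbilicity condition $n\,b = H\cdot\hat g$ evolves under the $D$-conformal flow $\dt g_t = s_t\,\hat g_t$, using the variation formulae from Lemma~\ref{L-btAt2}. First I would fix a point $q\in M$ and work with a local $g_t$-orthonormal frame $\{e_i(t)\}$ of $D$ evolving by \eqref{E-frameE}, so that the frame stays orthonormal for all $t$ (Lemma~\ref{prop-Ei-a}); this lets me convert statements about the tensor $b$ into statements about its components without extra connection terms from the moving frame. Introduce the "umbilicity defect" tensor $T_t := n\,b_t - H_t\cdot\hat g_t$, a $D$-truncated symmetric $(0,2)$-tensor; the claim is exactly $T_0=0\Rightarrow T_t\equiv0$.

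Next I would compute $\dt T_t$ on the frame $\{e_i(t)\}$. By \eqref{E-S-b} with $X=Y=e_i(t)$, the "$\dt$-frame" terms contribute $\tfrac12(\nabla_{e_i}\dt e_i + \nabla_{\dt e_i} e_i)^\perp = \tfrac12(-s\,\nabla_{e_i}e_i - s\,\nabla_{e_i}e_i)^\perp = -s\,b(e_i,e_i)$ (using \eqref{E-frameE}), so on this frame
\[
 \dt b(e_i(t),e_j(t)) = s\,b(e_i,e_j) - \tfrac12\,\delta_{ij}\,\nabla^\perp s - s\,b(e_i,e_j)\cdot[i{=}j] \ \text{-- more carefully:} \ \dt b(e_i,e_i) = -\tfrac12\nabla^\perp s.
\]
Combining with \eqref{E-S-H}$_1$, namely $\dt H = -\tfrac n2\nabla^\perp s$, and with $\dt\hat g_t = s\,\hat g_t$, one finds that the evolution of $T_t$ is linear and homogeneous in $T_t$: the inhomogeneous pieces involving $\nabla^\perp s$ cancel because $n\cdot(-\tfrac12\nabla^\perp s) - (-\tfrac n2\nabla^\perp s)\cdot\delta_{ij} = 0$ on the diagonal, and more generally $n\,\dt b - (\dt H)\hat g - H\,\dt\hat g = s\big(n\,b - H\hat g\big) = s\,T_t$. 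Thus $\dt T_t = s_t\,T_t$ as $(0,2)$-tensors along $D$.

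Finally, $\dt T_t = s_t\,T_t$ is a linear ODE (pointwise, in the finite-dimensional space of $D$-truncated symmetric tensors at $q$, relative to a fixed background frame), so $T_q(t) = \exp\!\big(\int_0^t s_\tau(q)\,d\tau\big)\,T_q(0)$; since $T_q(0)=0$, we get $T_q(t)=0$ for all $t$ and all $q$, i.e.\ $D$ is $g_t$-umbilical. The only mildly delicate point is bookkeeping the moving-frame corrections in \eqref{E-S-b} so as to see the cancellation of the $\nabla^\perp s$ terms cleanly; once the evolution is written in the form $\dt T_t = s_t\,T_t$ the conclusion is immediate. (An alternative, frame-free route: differentiate the identity $n\,b_t = H_t\,\hat g_t$ directly using \eqref{E-S-b}, \eqref{E-S-H}$_1$ and $\dt\hat g_t=s_t\hat g_t$, reaching the same ODE without invoking Lemma~\ref{prop-Ei-a}.)
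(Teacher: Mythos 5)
Your proposal is correct and follows essentially the same route as the paper: both rest on the variation formula \eqref{E-S-b} together with \eqref{E-S-H}$_1$ and a pointwise linear-ODE uniqueness argument, the paper phrasing it as preservation of the ansatz $b_t=\frac1n\tilde H_t\,\hat g_t$ and you phrasing it as the homogeneous equation $\dt T_t=s_tT_t$ for the defect tensor $T_t=n\,b_t-H_t\,\hat g_t$. Your explicit cancellation of the $\nabla^\perp s$ terms is a welcome concretization of the paper's brief appeal to ODE existence/uniqueness.
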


\begin{proof} Since $D$ is $g_0$-umbilical, we have $b=\frac1n\,H\,\hat g$ at $t=0$, where $H$ is the mean curvature vector field of $D$.
Applying to (\ref{E-S-b}) the theorem on existence/uniqueness of a solution of ODEs,
we conclude that $b_t=\frac1n\,\tilde H_t\,\hat g_t$ for all $t$, for some $\tilde H_t\in\Gamma(D^\perp)$.
Tracing this, we see that $\tilde H_t$ is the mean curvature vector of $b_t$, hence
$D$ is umbilical for any $g_t$.
\end{proof}

\subsection{${D^\perp}$-related geometric quantities}
\label{subsec:tvarbbar}

\begin{lemma}\label{L-nablaNN}
 Let $g_t\in\mathcal{M}$ and $\dt g_{t}=s_{t}\,\hat g_{t}$ for some $s_{t}\in C^1(M)$. Then
 the second fundamental tensor ${b^\perp}$ and its mean curvature vector ${H^\perp}$ are evolved as
\begin{equation}\label{E-nablaNNt2s}
 \dt {b^\perp} =-s\,{b^\perp},\qquad \dt{H^\perp} =-s\,{H^\perp}.
\end{equation}
\end{lemma}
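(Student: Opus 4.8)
The plan is to mirror the proof of Lemma~\ref{L-btAt2}, exploiting one crucial simplification. Since $\dt g_t=s_t\,\hat g_t$ and $\hat g_t$ annihilates ${D^\perp}$, the restriction $g_t|_{{D^\perp}}=g^\perp$ is independent of $t$, and the splitting $TM=D\oplus{D^\perp}$ (the same for every adapted metric) does not move; in particular the projection $P_D\colon TM\to D$ along ${D^\perp}$ is $t$-independent, and one may fix a $g^\perp$-orthonormal frame $\{\eps_\alpha\}$ of ${D^\perp}$ with $\dt\eps_\alpha=0$, which is then $g_t$-orthonormal for all $t$. Writing ${b^\perp}(X,Y)=P_D\big(\tfrac12(\nabla^t_XY+\nabla^t_YX)\big)$ for $X,Y\in{D^\perp}$ and differentiating in $t$ with $\dt X=\dt Y=0$, I would use (\ref{eq2time}) and the symmetry of $\Pi_t=\dt\nabla^t$ to get $\dt{b^\perp}(X,Y)=P_D(\Pi_t(X,Y))$.

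Next I would evaluate $g_t(\Pi_t(X,Y),e)$ for $e\in D$ and $X,Y\in{D^\perp}$ from (\ref{eq2}) with $S=s\,\hat g_t$. The key point is that $\hat g_t$ kills every vector of ${D^\perp}$: expanding $(\nabla^t_XS)(Y,e)=X(S(Y,e))-S(\nabla^t_XY,e)-S(Y,\nabla^t_Xe)$, the first term vanishes because $S(Y,e)=s\,\hat g_t(Y,e)=0$, the third vanishes because $S(Y,\cdot)=0$, and only $-S(\nabla^t_XY,e)=-s\,g_t(P_D\nabla^t_XY,e)$ survives; likewise for $(\nabla^t_YS)(X,e)$, while $(\nabla^t_eS)(X,Y)=0$ identically since $S(X,\cdot)=S(\cdot,Y)=0$. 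Summing, $g_t(\Pi_t(X,Y),e)=-\tfrac12\,s\,g_t(\nabla^t_XY+\nabla^t_YX,e)=-s\,g_t({b^\perp}(X,Y),e)$ for all $e\in D$; since both $P_D(\Pi_t(X,Y))$ and ${b^\perp}(X,Y)$ lie in $D$, this yields $\dt{b^\perp}(X,Y)=-s\,{b^\perp}(X,Y)$, i.e.\ (\ref{E-nablaNNt2s})$_1$. Then, since $\{\eps_\alpha\}$ stays $g_t$-orthonormal, ${H^\perp}=\sum_\alpha{b^\perp}(\eps_\alpha,\eps_\alpha)$, and differentiating term by term with $\dt\eps_\alpha=0$ gives $\dt{H^\perp}=-s\sum_\alpha{b^\perp}(\eps_\alpha,\eps_\alpha)=-s\,{H^\perp}$, which is (\ref{E-nablaNNt2s})$_2$.

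The step needing the most care is the bookkeeping in the second paragraph — tracking which of the three terms of each $(\nabla^tS)$-expression survives given $S=s\,\hat g_t$ and $X,Y\perp D$; but, in contrast to Lemma~\ref{L-btAt2}, there are no moving-frame correction terms, which is exactly why the ${D^\perp}$-case is so much easier than the $D$-case. \emph{Alternatively}, I could integrate $\dt g_t=s_t\hat g_t$ to write $g_t=(e^{2\phi_t}\hat g_0)\oplus g^\perp$ with $2\dot\phi_t=s_t$ and $\phi_0=0$, establish the conformal relations ${b^\perp}_t=e^{-2\phi_t}{b^\perp}_0$ and ${H^\perp}_t=e^{-2\phi_t}{H^\perp}_0$ by the same computation that proves Lemma~\ref{L-btAt}, and differentiate in $t$; this delivers (\ref{E-nablaNNt2s}) at once.
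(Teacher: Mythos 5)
Your proof is correct and follows essentially the same route as the paper's: both evaluate the connection variation $\Pi_t=\dt\nabla^t$ via (\ref{eq2}) with the $D$-truncated tensor $S=s\,\hat g_t$ paired against ${D^\perp}$-vectors, observe that only the $-S(\nabla^t_XY,\cdot)$ terms survive, and then trace over a $t$-independent $g^\perp$-orthonormal frame of ${D^\perp}$ (which stays orthonormal precisely because $\hat g_t$ annihilates ${D^\perp}$). Your alternative argument, integrating to $g_t=(e^{2\phi_t}\hat g_0)\oplus g^\perp$ and differentiating the resulting conformal relation $b^\perp_t=e^{-2\phi_t}b^\perp_0$, is also valid and is consistent with the explicit formula $b^\perp_t=b^\perp_0\exp\big(\frac2n\int_0^t(\Div^\perp H_s)\,ds\big)$ used later in the proof of Theorem~\ref{T-02}.
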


\begin{proof}
We shall show for the more general setting $S=\dt g_t$ that
\begin{equation}\label{E-nablaNNt2}
 \dt {b^\perp} = -S^\sharp\circ{b^\perp},\qquad \dt {H^\perp} = -S^\sharp({H^\perp}).
\end{equation}
Using (\ref{eq2}), we compute for any $X\in D$ and $\eps_\alpha,\eps_\beta\in D^\perp$,
\begin{eqnarray*}
  g_t(\dt {b^\perp}(\eps_\alpha, \eps_\beta), X)
         \eq\frac 12\,g_t(\dt(\nabla^t_{\eps_\alpha} \eps_\beta) + \dt(\nabla^t_{\eps_\beta} \eps_\alpha),\ X)\\
 \eq\frac12\big[(\nabla^t_{\eps_\alpha} S)(X, \eps_\beta)+(\nabla^t_{\eps_\beta} S)(X, \eps_\alpha)
 -(\nabla^t_X S)(\eps_\alpha, \eps_\beta) \big]\\
 \eq-\frac12\big[S(\nabla^t_{\eps_\alpha} \eps_\beta, X)+S(\nabla^t_{\eps_\beta} \eps_\alpha, X)\big]
        =-S({b^\perp}(\eps_\alpha, \eps_\beta), X).
\end{eqnarray*}
From this (\ref{E-nablaNNt2})$_1$ follows when $S=s\,\hat g$.
Next, for any $X\in\Gamma(D)$, we have
\begin{eqnarray*}
 g_t(\dt{H^\perp}, X) \eq \sum\nolimits_\alpha g_t(\dt(\nabla_{\eps_\alpha}\eps_\alpha), X)
 = \frac12\big[2(\nabla_{\eps_\alpha} S)(\eps_\alpha, X) -(\nabla_X S)(\eps_\alpha,\eps_\alpha)\big]\\
  \eq-\sum\nolimits_\alpha S(\nabla_{\eps_\alpha}\eps_\alpha, X) = -S({H^\perp}, X),
\end{eqnarray*}
which confirms (\ref{E-nablaNNt2})$_2$. From (\ref{E-nablaNNt2}) for $S=s\,\hat g$ we obtain (\ref{E-nablaNNt2s}).
\end{proof}

Next we show that $D$-conformal variations of metrics preserve ``umbilical"
(i.e., ${b^\perp}={H^\perp}\cdot g_{|{D^\perp}}$) and ``harmonic" (i.e., ${H^\perp}=0$) properties of ${D^\perp}$.

\begin{proposition}\label{P-08bb}
Let $\dt g_t=s_t\,\hat g_t\ (s_t:M\to\RR)$, be a $D$-conformal family of Riemannian metrics on a manifold $(M,D,{D^\perp})$.
If $D^\perp$ is either umbilical (e.g., totally geodesic) or harmonic for $g_0$, then $D^\perp$ is the same for any $g_t$.
\end{proposition}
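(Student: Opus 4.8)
The plan is to mirror the argument of Proposition~\ref{P-08b}, but now using the evolution equations for ${b^\perp}$ and ${H^\perp}$ from Lemma~\ref{L-nablaNN} rather than those for $b$ and $H$. The key simplification is that \eqref{E-nablaNNt2s} is pointwise linear and homogeneous in ${b^\perp}$ and in ${H^\perp}$, with no inhomogeneous term: $\dt {b^\perp}=-s_t\,{b^\perp}$ and $\dt {H^\perp}=-s_t\,{H^\perp}$. Hence any linear relation among ${b^\perp}$, ${H^\perp}$ and $g_t|_{D^\perp}$ that holds at $t=0$ should be preserved, because the whole system is governed by the scalar ODE $\dot y=-s_t\,y$ along each integral curve, and $g_t|_{D^\perp}=g^\perp$ is in fact $t$-independent (the deformation is $D$-conformal, so $\dt g_t$ annihilates $D^\perp$).

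\medskip

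First I would treat the harmonic case: if ${H^\perp}=0$ at $t=0$, then \eqref{E-nablaNNt2s}$_2$ gives $\dt{H^\perp}=-s_t\,{H^\perp}$, a linear homogeneous ODE (fibrewise in $TM$) with zero initial data, so by uniqueness ${H^\perp}\equiv 0$ for all $t$; thus $D^\perp$ stays harmonic. Second, the umbilical case: suppose ${b^\perp}={H^\perp}\cdot g^\perp$ (up to the normalisation $p\,{b^\perp}={H^\perp}\cdot g_{|D^\perp}$ used in the introduction) at $t=0$. Set $\Theta_t:={b^\perp}_t-\tfrac1p\,{H^\perp}_t\cdot g^\perp$. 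Using $\dt g^\perp=0$ together with \eqref{E-nablaNNt2s}, one computes $\dt\Theta_t=-s_t\,{b^\perp}_t+\tfrac1p\,s_t\,{H^\perp}_t\cdot g^\perp=-s_t\,\Theta_t$; this is again a linear homogeneous ODE with $\Theta_0=0$, so $\Theta_t\equiv0$ and $D^\perp$ remains umbilical. The totally geodesic case (${b^\perp}=0$) is the special instance $\Theta_t={b^\perp}_t$, handled identically, or it follows from \eqref{E-nablaNNt2s}$_1$ directly.

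\medskip

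To be careful about the ODE/uniqueness step: the equations in \eqref{E-nablaNNt2s} should be read as a family of linear ODEs in $t$, one at each point $q\in M$, for the values of the tensor fields ${b^\perp}$, ${H^\perp}$ regarded in a fixed background bundle (e.g. using a $g_0$-parallel identification along $t$, or simply $S^2T^*M$-valued and $TM$-valued curves). Since $s_t(q)$ depends continuously (indeed smoothly) on $t$, standard existence/uniqueness for linear ODEs applies pointwise, and the unique solution through zero is zero; the relations being linear with $t$-independent coefficient $g^\perp$, they are preserved. I would note that, as in Proposition~\ref{P-08b}, one should remark that after preservation the traced quantity ${H^\perp}_t$ is indeed the mean curvature vector of ${b^\perp}_t$, which is automatic here because taking $\tr_{g_t}$ commutes with the relation and ${H^\perp}_t$ is by definition that trace.

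\medskip

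The main obstacle is essentially bookkeeping rather than conceptual: one must make sure the ``$t$-independent'' claims are used correctly — namely that $\dt g_t$ vanishes on $D^\perp$ (so $g_t|_{D^\perp}$ does not move, and $\nabla^\perp$ is fixed as asserted in the text) — so that the relation defining umbilicity does not pick up extra terms when differentiated in $t$. Once that is in place, the proof is a two-line ODE uniqueness argument parallel to the proof of Proposition~\ref{P-08b}; no parabolic machinery or integral formulae are needed here.
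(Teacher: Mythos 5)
Your proposal is correct and follows essentially the same route as the paper: both rest on the evolution equations (\ref{E-nablaNNt2s}) of Lemma~\ref{L-nablaNN} together with uniqueness for linear homogeneous ODEs, using that $g_t|_{D^\perp}=g^\perp$ is $t$-independent. Your explicit computation that the deficit tensor $\Theta_t=b^\perp_t-\tfrac1p H^\perp_t\,g^\perp$ itself satisfies $\dt\Theta_t=-s_t\Theta_t$ is just a cleaner packaging of the paper's step ``$b^\perp_t=\tfrac1p\tilde H_t\,g^\perp_t$ by ODE uniqueness, then trace to identify $\tilde H_t$''.
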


\begin{proof} If $D^\perp$ is $g_0$-umbilical, then $b^\perp=\frac1p\,H^\perp\,g^\perp$ at $t=0$,
where $H^\perp$ is the mean curvature vector field of $D^\perp$.
 Applying to (\ref{E-nablaNNt2s})$_1$ the theorem on existence and uniqueness of a solution of ODEs,
we conclude that $b^\perp_t=\frac1p\,\tilde H_t\,g^\perp_t$ for all $t$, where $\tilde H_t\in\Gamma(D)$.
Tracing this, we show that $\tilde H_t$ is the mean curvature vector of $b^\perp_t$, hence
$D^\perp$ is umbilical for any $g_t$. Indeed, $\tilde H_t\equiv0$ when $D^\perp$ is $g_0$-totally geodesic.
The remaining property, i.e., $D^\perp$ is harmonic, can be proved similarly.
\end{proof}

\section{Proofs of main results}
\label{sec:1-egf}

\subsection{Introducing the Extrinsic Geometric Flow}
\label{sec:1-def}

\begin{definition}\rm
Given $g_0=g$, a family of $(D,D^\perp)$-adapted Riemannian metrics $g_t,\ t\in[0,\eps)$, on $M$ will be called
(a) an \textit{Extrinsic Geometric Flow} (EGF) if (\ref{eq1}) holds;
(b) a \textit{normalized EGF} if
\begin{equation}\label{eq1n}
 \dt g_t = -\big(\,\frac2n\,\Div^\perp\!H_t +r(t)\big)\hat g_t,\quad {\rm where}\quad
 r(t)=-\frac2n\int_M (\Div^\perp\!H_t)\,d\vol_t/\vol(M,g_t).
\end{equation}
\end{definition}

The EGF (\ref{eq1}) and its normalized companion (\ref{eq1n}) provide some methods of evolving Riemannian metrics on foliated manifolds.  Obviously, both EGFs preserve harmonic (and totally geodesic) foliations.
If $g_0\in\mathcal{M}_1$, then all metrics $g_t\ (t\ge0)$ of (\ref{eq1n}) belong to $\mathcal{M}_1$,
because, see (\ref{E-dtdvol}),
\[
 \frac{d}{dt}\vol(M,g_t)=-\int_M\Big(\Div^\perp\!H_t +\frac n2\,r(t)\Big) d\vol_t
 = r(t)\vol(M,g_t)-\int_M r(t)\,d\vol_t =0.
\]
Substituting (\ref{E-divHt}) in the definition (\ref{eq1n}) of $r(t)$, we have
\[
 r(t)=-\frac2n\int_M g(H_t, H_t)\,d\vol_t\!/\vol(M,g_t)\le0.
\]

By~Proposition~\ref{P-08b}, EGFs preserve the following properties of $D^\perp$:
 (i)~{umbilical} $({b^\perp}={H^\perp}\cdot\,g_{|{D^\perp}})$,
 (ii)~{totally geodesic} $({b^\perp}=0)$,
 (iii)~harmonic $({H^\perp}=0)$;
which in case of integrable $D$ mean that the foliation tangent to $D$ is
(i)~{conformal}, (ii)~{Riemannian}, (iii)~transversally harmonic.

\vskip1mm
Let $g_t$ be a family of Riemannian metrics of finite volume on $(M,D,D^\perp)$.
Metrics $\tilde g_t = (\phi_t\hat g_{t})\oplus g_t^\perp$ with $\phi_t=\vol(M,g_t)^{-2/n}$ have unit volume:
$\int_M d\,\widetilde{\vol}_t=1$.
 The next proposition shows that \textit{unnormalized and normalized EGFs differ only by rescaling along the distribution~$D$}.

\begin{proposition}\label{P-normEG}
Let $g_t$ be a solution (of finite volume) to (\ref{eq1}) on $(M,D,D^\perp)$.
Then the metrics
\[
 \tilde g_{\,t}=(\phi_t\,\hat g_t)\oplus g^\perp,\quad
 {\rm where}\quad
 \phi_t=\vol(M,g_t)^{-2/n},
\]
evolve according to the normalized EGF
\begin{equation}\label{eq1-nB}
 \partial_{\,t}\,\tilde g_t = -\big(\frac2n\Div^\perp\tilde H_t+{\rho_t}\big)\,\hat{\tilde g}_t,\quad
 \mbox{ where }\ \rho_t={-\frac2n\int_M(\Div^\perp H)\,d\vol_t}\,/\,{\vol(M,g_t)}.
\end{equation}
\end{proposition}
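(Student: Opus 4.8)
The plan is to derive the normalized flow (\ref{eq1-nB}) by differentiating the rescaled family $\tilde g_t=(\phi_t\,\hat g_t)\oplus g^\perp$ in $t$ and rewriting the result, using that the whole effect of the rescaling lives along $D$.

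First I would record two preliminary observations. Since the right-hand side of (\ref{eq1}) is $D$-truncated, the $D^\perp$-part of $g_t$ does not move, so $g_t^\perp\equiv g_0^\perp=:g^\perp$ (this is the $g^\perp$ appearing in the statement); consequently the induced connection $\nabla^\perp$ on $D^\perp$ and the operator $\Div^\perp$ are independent of $t$, as noted in the Introduction. Also $\phi_t=\vol(M,g_t)^{-2/n}$ is a function of $t$ only, hence spatially constant; applying Lemma~\ref{L-btAt} with $e^{2\phi}=\phi_t$ gives $\tilde H_t=H_t$, and therefore $\Div^\perp\tilde H_t=\Div^\perp H_t$.

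Next I would compute, using $\hat{\tilde g}_t=\phi_t\hat g_t$ and $\dt\hat g_t=-\frac2n(\Div^\perp H_t)\hat g_t$ from (\ref{eq1}),
\[
 \dt\tilde g_t=\dot\phi_t\,\hat g_t+\phi_t\,\dt\hat g_t
 =\Big(\frac{\dot\phi_t}{\phi_t}-\frac2n\Div^\perp H_t\Big)\hat{\tilde g}_t
 =-\Big(\frac2n\Div^\perp\tilde H_t-\frac{\dot\phi_t}{\phi_t}\Big)\hat{\tilde g}_t .
\]
It then remains to identify $-\dot\phi_t/\phi_t$ with $\rho_t$. From $\phi_t=\vol(M,g_t)^{-2/n}$ one gets $\dot\phi_t/\phi_t=-\frac2n\,\big(\tfrac{d}{dt}\vol(M,g_t)\big)/\vol(M,g_t)$, and integrating the pointwise formula (\ref{E-dtdvol}) with $s_t=-\frac2n\Div^\perp H_t$ yields $\tfrac{d}{dt}\vol(M,g_t)=-\int_M(\Div^\perp H_t)\,d\vol_t$. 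Hence $\dot\phi_t/\phi_t=\frac2n\int_M(\Div^\perp H_t)\,d\vol_t/\vol(M,g_t)=-\rho_t$, and substituting this back produces exactly (\ref{eq1-nB}). I would finish by noting that each $\tilde g_t$ is $(D,D^\perp)$-adapted and positive definite since $\phi_t>0$, and has unit volume by (\ref{E-dtdvol}).

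The computation itself is routine bookkeeping; the one point I would treat with care is the claim that $\Div^\perp$ (equivalently $\nabla^\perp$ on $D^\perp$) is unchanged both under the $D$-conformal rescaling $g_t\mapsto\tilde g_t$ and along the flow, together with $\tilde H_t=H_t$, so that $\Div^\perp$ may be pulled uniformly through the family. This is precisely Lemma~\ref{L-btAt} for a (spatially) constant conformal factor combined with the $t$-independence of $\nabla^\perp$ on $D^\perp$, and it is what makes the unnormalized and normalized EGFs differ only by the scalar reparametrization $\phi_t$ in the $D$-direction.
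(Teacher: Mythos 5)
Your proof is correct and follows essentially the same route as the paper's: Lemma~\ref{L-btAt} with a spatially constant factor gives $\tilde H_t=H_t$, the product rule gives $\partial_t\tilde g_t=-\big(\frac2n\Div^\perp\tilde H_t-\dot\phi_t/\phi_t\big)\hat{\tilde g}_t$, and (\ref{E-dtdvol}) supplies the volume derivative. The only (harmless) difference is that you identify $\dot\phi_t/\phi_t=-\rho_t$ by differentiating the definition of $\phi_t$ directly, whereas the paper deduces the same identity from the constancy of the normalized volume $\int_M d\,\widetilde{\vol}_t=1$.
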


\begin{proof}
Since $\phi$ depends only on $t$, by Lemma~\ref{L-btAt}, $\tilde H_t=H_t$ for metrics $\tilde g_t$ and $g_t$.
Hence, $\Div^\perp\tilde H_t=\Div^\perp H_t$.
From (\ref{E-dtdvol}) with $s=-\frac2n(\Div^\perp H_t)$ we get the derivative of the volume function
\[
 \frac{d}{dt}\vol(M,g_t)= \frac{d}{dt} \int_M d\vol_t = -\int_M (\Div^\perp H_t)\,d\vol_t.
\]
Thus $\phi_t=\vol(M,g_t)^{-2/n}$ is a smooth function of variable $t$.
By Lemma~\ref{L-btAt}, we have $\tilde b_t=\phi_t\cdot b_t$. Therefore
\[
 \partial_{\,t}\,\tilde g_t =\phi_t\dt g_t + \phi\,'_t\,\hat g_t = -\big(\frac2n\Div^\perp \widetilde H_t -\frac{\phi\,'_t}{\phi_t}\big)\,\hat{\tilde g}_t.
\]
Notice that $d\,\widetilde{\vol}_t=\phi^{n/2}_t\,d\vol_t$.
Using this and (\ref{E-dtdvol}), we obtain
\[
 \frac{d}{dt}\,\widetilde{\vol}_t=\frac{d}{dt}\,(\phi^{\frac n2}_t \vol_t)
 =\big(\frac n2\,\phi_t^{\frac n2-1}\phi'_t +\frac12\,\phi^{\frac n2}_t n s \big)\vol_t
  =\frac n2\Big(\frac{\phi'_t}{\phi_t}+s\Big)\widetilde{\vol}_t.
\]
Let $\rho_t$ be the average of $s$, see (\ref{eq1-nB}). From the above we get
\begin{eqnarray*}
 0\eq 2\,\frac{d}{dt}\int_M d\,\widetilde{\vol}_t =\int_M n\Big(\frac{\phi'_t}{\phi_t}+s\Big)d\,\widetilde{\vol}_t
 = n\big(\frac{\phi\,'_t}{\phi_t}+\rho_t\big).
\end{eqnarray*}
This shows that $\rho_t = -{\phi\,'_t}/{\phi_t}$.
Hence, $\tilde g$ evolves according to (\ref{eq1-nB}).
\end{proof}

\subsection{Proof of Theorems~\ref{T-02}--\ref{T-03} and Corollaries~\ref{C-umbilic}--\ref{C-minimal}}

\begin{proposition}\label{P-tauAk-H}
Let $D^\perp$ be integrable. The mean curvature vector $H_t$ of $D$ and its $D^\perp$-divergence
with respect to $g_t$ of EGF (\ref{eq1}) or (\ref{eq1n}) satisfy the following PDEs on any leaf of $D^\perp$:
\begin{equation}\label{E-tauAk-H-2}
 \dt H =\nabla^\perp(\Div^\perp \!H),
 \qquad
 \dt(\Div^\perp H) =\Delta^\perp(\Div^\perp H).
\end{equation}
\end{proposition}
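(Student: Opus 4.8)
The plan is to combine the general $D$-conformal variation formulae of Lemma~\ref{L-btAt2} with the specific form of the EGF equation. For the unnormalized flow~(\ref{eq1}) we have $S_t = \dt g_t = s_t\,\hat g_t$ with $s_t = -\tfrac 2n\,\Div^\perp\!H_t$. Substituting this $s_t$ directly into the three identities of~(\ref{E-S-H}) yields
\[
 \dt H_t = -\frac n2\,\nabla^\perp s_t = \nabla^\perp(\Div^\perp\!H_t),
 \qquad
 \dt(\Div^\perp\!H_t) = -\frac n2\,\Delta^\perp s_t = \Delta^\perp(\Div^\perp\!H_t),
\]
which are exactly the two asserted PDEs. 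So the heart of the argument is just the substitution; no new computation is needed beyond invoking Lemma~\ref{L-btAt2}.

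First I would record that $D^\perp$ being integrable means each leaf is an immersed submanifold, so the operators $\nabla^\perp$, $\Div^\perp$, $\Delta^\perp$ restrict to the intrinsic Levi-Civita connection, divergence and Laplacian on the leaf (with respect to the induced metric $g^\perp_t$), and moreover — as noted in the introduction after~(\ref{eq1}) — these restricted operators are $t$-independent since $g^\perp$ does not evolve under~(\ref{eq1}). This is what lets us speak of the PDEs ``on any leaf of $D^\perp$'': both sides of~(\ref{E-tauAk-H-2}) are sections over a fixed leaf and the spatial operators there are the fixed leaf operators. Then I would simply apply Lemma~\ref{L-btAt2} with $s = s_t = -\tfrac2n\,\Div^\perp\!H_t$ to get~(\ref{E-tauAk-H-2}) for the flow~(\ref{eq1}).

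For the normalized flow~(\ref{eq1n}) the evolution is $\dt g_t = -\bigl(\tfrac2n\,\Div^\perp\!H_t + r(t)\bigr)\hat g_t$, i.e.\ $s_t = -\tfrac2n\,\Div^\perp\!H_t - r(t)$ with $r(t)$ depending only on $t$. Since $\nabla^\perp$, $d^\perp$ and $\Delta^\perp$ all annihilate functions of $t$ alone, the extra term $-r(t)$ drops out of $\nabla^\perp s_t$ and of $\Delta^\perp s_t$; hence~(\ref{E-S-H})$_1$ and~(\ref{E-S-H})$_2$ again give precisely~(\ref{E-tauAk-H-2}), with the same right-hand sides. (Equivalently, one may invoke Proposition~\ref{P-normEG}: the normalized flow differs from the unnormalized one only by a $t$-dependent rescaling along $D$, which by Lemma~\ref{L-btAt} leaves $H$ unchanged.) Thus both cases reduce to the single substitution, and I do not anticipate a genuine obstacle — the only point requiring care is the bookkeeping that the spatial differential operators in~(\ref{E-tauAk-H-2}) are the fixed leafwise operators, so that the two displayed equations really are parabolic PDEs (a vector heat equation for $H$ and a scalar heat equation for $\Div^\perp\!H$) on each compact leaf, which is the form needed for the subsequent application of the heat-flow theory in Theorem~\ref{T-02}.
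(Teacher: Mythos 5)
Your proposal is correct and follows exactly the paper's own argument: substitute $s_t=-\frac2n\,\Div^\perp\!H_t$ (or $s_t=-\frac2n\,\Div^\perp\!H_t-r(t)$, noting that $r(t)$ is annihilated by $\nabla^\perp$ and $\Delta^\perp$) into the variation formulae (\ref{E-S-H}) of Lemma~\ref{L-btAt2}. The extra remarks about the leafwise operators being $t$-independent are a useful clarification but do not change the route.
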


\begin{proof}
By Lemma \ref{L-btAt2} with $s=-\frac2n\,(\Div^\perp H)$ or $s=-\frac2n\,\Div^\perp H-r(t)$,
we obtain (\ref{E-tauAk-H-2})$_1$. Similarly,  from (\ref{E-S-H})$_2$ we deduce (\ref{E-tauAk-H-2})$_2$.
\end{proof}

The eigenvalue problem, $-\Delta^\perp\,u=\lambda\,u$, on a leaf $L^\perp$ (of $D^\perp$ when it is integrable) has solution with a sequence of eigenvalues with repetition (each one as many times as the dimension of its finite-dimensional eigenspace) $0=\lambda_0<\lambda_1\le\lambda_2\le\cdots\uparrow\infty$.
 Let $\phi_j$ be an eigenfunction with eigenvalue $\lambda_j$ satisfying $\int_{L^\perp}\phi_j^2(x)\,d x=1$.
Then $G^\perp(t,x,y)=\sum_{j} e^{\lambda_j t}\phi_j(x)\,\phi_j(y)$ is a fundamental solution of the heat equation on $L^\perp$,
see details in Section~\ref{subsec:heat}.
 A solution satisfying $u(x,0)=u_0(x)$ is given by $u(x,t)=\int_{L^\perp} G^\perp(t,x,y) u_0(y)\,d y$.
Moreover, if $u_0\in L^2(L^\perp)$ then the solution converges uniformly, as $t\to\infty$, to a $D^\perp$-harmonic function (constant $\lambda_1$ when $L^\perp$ is closed).

\vskip1.5mm\noindent\textbf{Proof of Theorem~\ref{T-02}}.
This is based on the heat flow for 1-forms, see Section~\ref{sec:2.4-heat1}.
Let~$\theta^t_H$ be the dual 1-form on $D^\perp$ to the mean curvature vector field $H_t$
(with respect to $g_t$ for $t\ge0$).
Using $\Div^\perp H_t=-\delta^\perp\theta^t_H$ and
applying (\ref{E-S-H})$_3$ with $s=-\frac2n\Div^\perp H$, we show similarly to (\ref{E-tauAk-H-2})$_1$ that
\begin{equation}\label{E-tauAk-H}
 \dt\theta^t_H=-d^\perp\delta^\perp\theta^t_H,
\end{equation}
where $\theta^0_H=\theta_H$ is known.
We obtain (for $L_2$-product along the leaves of $D^\perp$)
\[
 \frac12\,\dt\,(\|\theta^t_H\|^2)=\dt\<\theta^t_H, \ \theta^t_H\>
 =-\<d^\perp\delta^\perp\theta^t_H,\ \theta^t_H\> =-\|\delta^\perp\theta^t_H\|^2\le0.
\]
By the above, we have uniqueness of a solution of the linear PDE (\ref{E-tauAk-H}).

 By Theorem~B in Section~\ref{sec:2.4-heat1}, the heat equation (considered on the leaves of $D^\perp$)
\begin{equation}\label{E-tauAk-thetaH-2}
 \dt\,\omega =\Delta^\perp_d\,\omega,\qquad\mbox{where}\quad\omega_0 =\theta_H,
\end{equation}
 admits a unique solution $\omega_t\ (t\ge0)$.
 As $t\to\infty$, the 1-form $\omega_t$ converges exponentially to a $D^\perp$-harmonic 1-form $\omega_\infty$,
 i.e., $\|\omega_t - \omega_\infty\|\le c\,e^{-\lambda\,t}$ for some constants $c,\lambda>0$.
 Since $\omega_0=\theta_H$ is $D^\perp$-closed, again by Theorem~B, we have $d^\perp\omega_t=0$ for all $t\ge0$.
 (Notice that for $p=1$ the 1-form $\theta_H$ is always $D^\perp$-closed).
 Comparing (\ref{E-tauAk-H}) with (\ref{E-tauAk-thetaH-2}), we conclude that $\theta^t_H=\omega_t$
 is a unique solution of (\ref{E-tauAk-H}).
 By~the above, $\Div^\perp H_\infty=0$ (here $H_\infty\in\Gamma(D^\perp)$ is dual to $\theta^\infty_H=\omega_\infty$).
 Applying the integral formula (\ref{E-divHt}), we conclude that
 $\lim\limits_{t\to\infty} \|H_t\|^2=0$, hence $H_\infty=0$.

 With known $H_t$, the PDE (\ref{eq1}) also has a unique smooth global solution, and
\[
 \hat g_t=\hat g_0\exp\big(\!-\frac2n\int_0^t (\Div^\perp H_s)\,ds\big).
\]
By~Lemma~\ref{L-nablaNN}, we also have
 $b^\perp_t=b^\perp_0\exp(\frac2n\int_0^t (\Div^\perp H_s)\,ds)$ for $t\ge0$.
Since the leaves of $D^\perp$ are compact, and $\Div^\perp H_t$ satisfies
(\ref{E-tauAk-H-2})$_2$ (on any leaf of $D^\perp$),
we have $|\Div^\perp H_t|\le e^{-\lambda_1 t}|\Div^\perp H_0|$~and
\[
 \Big|\int_0^t (\Div^\perp H_s)\,ds\Big|\le\int_0^t\big|\Div^\perp H_s\big|\,ds
 <\tilde c\int_0^t e^{-\lambda_1 t}\,ds =\tilde c\,\frac{1-e^{-\lambda_1 t}}{\lambda_1}<\frac{\tilde c}{\lambda_1}
\]
for some $\tilde c>0$ and all $t\ge0$. Now, for some $c\ge1$ and all $t\ge0$, we have the uniform bounds
\[
 c^{-1}\hat g_0\le \hat g_t\le c\,\hat g_0,\qquad
 c^{-1}b^\perp_0\le b^\perp_t\le c\,b^\perp_0.
\]
From the above and Proposition~\ref{P-converge},
$g_t\to g_\infty$ and $b^\perp_t\to b^\perp_\infty$ as $t\to\infty$ in $C^\infty$-topology.
\qed

\vskip1.5mm\noindent\textbf{Proof of Corollary~\ref{C-umbilic}}.
By Theorem~\ref{T-02}, the EGF metrics $g_t$ converge to a smooth metric $g_\infty$ with $H_\infty=0$.
By~Proposition~\ref{P-08b}, EGFs preserve the umbilicity of~$D$, hence $D$ is $g_\infty$-umbilical.
Note that an umbilical foliation with vanishing mean curvature is totally geodesic.
\qed

\vskip1.5mm\noindent\textbf{Proof of Corollary~\ref{C-minimal}}.
The leaves of orthogonal foliation are immersed compact cross-sections, hence $\calf$ is taut. On the other hand,
by Theorem~\ref{T-02}, the EGF metrics $g_t$ converge  as $t\to\infty$ to a smooth Riemannian metric $g_\infty$ with $H_\infty=0$.
\qed

\vskip1.5mm\noindent\textbf{Proof of Theorem~\ref{T-03}}.
The vector field $\widetilde H_t=H_t-X$ satisfies PDEs of Proposition~\ref{P-tauAk-H},
\begin{equation}\label{E-tauAk-H-X}
 \dt\widetilde H_t = \nabla^\perp\Div^\perp\widetilde H_t,\qquad
 \dt(\Div^\perp\widetilde H_t) =\Delta^\perp(\Div^\perp\widetilde H_t).
\end{equation}
Since $\dt X=0$, the 1-form $\theta^t_{\widetilde H}:=
 \theta_{H_t}-\theta_X$ satisfies the PDE, see (\ref{E-tauAk-H}),
\begin{equation}\label{E-tauAk-H-X2}
 \dt\theta_{\widetilde H}=-d^\perp\delta^\perp\theta_{\widetilde H}
\end{equation}
with the initial value  $\theta^0_{\widetilde H} =\theta_H-\theta_X$.
 As in the proof of Theorem~\ref{T-02}, we have uniqueness of a solution of
 (\ref{E-tauAk-H-X2}) for $t\ge0$.
By Theorem~B in Section~\ref{sec:2.4-heat1}, the heat equation (considered on the leaves of $D^\perp$)
\begin{equation}\label{E-tauAk-thetatildeH-0}
 \dt\,\omega =\Delta^\perp_d\,\omega
\end{equation}
admits a unique smooth solution $\omega_t\ (t\ge0)$ with the initial value $\omega_0 =\theta_H-\theta_X$.
 As $t\to\infty$, the 1-form $\omega_t$ converges exponentially to a $D^\perp$-harmonic 1-form $\omega_\infty$,
 and $\|\omega_t-\omega_\infty\|\le c\,e^{-\lambda\,t}$ for some constants $c,\lambda>0$.
 Since $\omega_0$ is $D^\perp$-closed, again by Theorem~B, we have $d^\perp\omega_t=0$ for all $t\ge0$.
 Comparing (\ref{E-tauAk-H-X2}) with (\ref{E-tauAk-thetatildeH-0}), as in the proof of Theorem~\ref{T-02}, we conclude that $\theta^t_{\widetilde H}=\omega_t$ is a unique solution of (\ref{E-tauAk-H-X2}).
 By the above, $\Div^\perp(H_\infty-X)=0$ (here $H_\infty-X$ is dual to $\omega_\infty$).
 With known $H_t$, the PDE (\ref{eq1-X}) also has a unique smooth global solution, and
 $\hat g_t=\hat g_0\exp\big(\!-\frac2n\int_0^t \Div^\perp(H_s-X)\,ds\big)$.
By~Lemma~\ref{L-nablaNN}, we also have
 $b^\perp_t=b^\perp_0\exp(\frac2n\int_0^t\Div^\perp(H_s-X)\,ds)$ for all $t\ge0$.
 Using $|\Div^\perp (H_t-X)|\le e^{-\lambda_1 t}|\Div^\perp(H_0-X)|$, we obtain
\[
 \Big|\int_0^t (\Div^\perp(H_s-X))\,ds\Big|\le\int_0^t\big|\Div^\perp(H_s-X)\big|\,ds
 <\tilde c\int_0^t e^{-\lambda_1 t}\,ds =\tilde c\,\frac{1-e^{-\lambda_1 t}}{\lambda_1}<\frac{\tilde c}{\lambda_1}
\]
for some $\tilde c>0$ and all $t\ge0$.
 As in the proof of Theorem~\ref{T-02}, we conclude that
 $g_t$ converges in $C^\infty$-topology as $t\to\infty$ to a Riemannian metric $g_\infty$
with $D^\perp$-harmonic 1-form $\theta_{H_\infty-X}$.

Certainly, if $H^1(L^\perp,\RR)=0$ for the leaves of $D^\perp$, then $H_\infty=X$.
\qed

\section{More examples}
\label{sec:more}

\subsection{The codimension-one case}

Let $(M,g)$ be a closed Riemannian manifold with a codimension-one distribution $D$ (i.e., $p=1$).
Let $N$ be the unit vector field (orthogonal to $D$), and $b$ the scalar second fundamental form of $D$
with respect to $N$. Indeed, $2\,b(X,Y)=g(\nabla_XY+\nabla_YX,\ N)$.
Hence,  $H=\tau_1\,N$ and $\tau_1=g(N, H)=\tr\!A$, where $A=b\,^\sharp$ is the shape operator.
By~Lemma~\ref{L-btAt2}, we find the variations (see also \cite{rw-m})
\begin{equation}\label{E-Atau1}
 \dt A = - \frac12\,N(s)\,\hat{\id},\qquad
 \dt\tau_1= -\frac n2\,N(s),
\end{equation}
where $\dt g_{t}=s_{t}\,\hat g_{t}$.
 For a codimension-one case, the EGF definitions (\ref{eq1}) and (\ref{eq1n}) read as:
\begin{eqnarray}\label{eq1-1}
 \dt g_t \eq -\frac2n\,N(\tau_1)\,\hat g_t,\\
\label{eq1n-1}
 \dt g_t \eq -\big(\,\frac2n\,N(\tau_1)+r(t)\big)\hat g_t,\qquad r(t)=-\frac2n\int_M N(\tau_1)\,d\vol_t/\vol(M,g_t),
\end{eqnarray}
where $g_0=g$.
The equality (\ref{E-divN}) with $\xi = f N$ reduces to the known formula, see \cite{rw-m},
\begin{equation}\label{E-IF-NF}
 \int_M N(f)\,d\vol =\int_M \tau_1(N)f\,d\vol.
\end{equation}
By the above, we have $ r(t)=-\frac2n\int_M \tau^2_1\,d\vol_t\!/\vol(M,g_t)\le0$.

\begin{proposition}\label{C-2ntau1}
Let $N$-curves compose a fibration $S^1\overset{i}\hookrightarrow M\overset{\pi}\to B$
of a closed Riemannian manifold $(M,g)$.
Then the following PDE:
\begin{equation*}
 \dt g_t = -\frac2n\,N(\tau_1)\,\hat g_t,\qquad g_0=g
\end{equation*}
 has a unique smooth global solution $g_t\ (t\ge0)$.
The metrics $g_t$ approach as $t\to\infty$ to a smooth metric $g_\infty$ for which $D$ is $g_\infty$-harmonic.
If $D$ is $g$-umbilical then it is $g_\infty$-totally geodesic.
\end{proposition}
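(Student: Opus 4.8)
The plan is to recognize this as the codimension-one special case ($p=1$) of Theorem~\ref{T-02}, and to verify that all hypotheses of that theorem are automatically met here, so that the conclusion follows with essentially no extra work. First I would observe that when $p = \dim D^\perp = 1$, the extra hypothesis ``$d^\perp\theta_H = 0$'' in Theorem~\ref{T-02} is vacuous: a $1$-form on a $1$-dimensional leaf $S^1$ is always closed (there are no $2$-forms), so $d^\perp\theta_H = 0$ holds identically, as already noted in the text following (\ref{E-tauAk-thetaH-2}). Next, the assumed fibration $S^1\overset{i}\hookrightarrow M\overset{\pi}\to B$ of $N$-curves is exactly the structure ``the leaves of $D^\perp$ compose a fibration $L^\perp\hookrightarrow M\to B$'' with $L^\perp = S^1$, which is compact and orientable. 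Thus the situation falls squarely under Theorem~\ref{T-02}, whose conclusion gives a unique smooth global solution $g_t$ $(t\ge0)$ of (\ref{eq1}), converging in $C^\infty$-topology as $t\to\infty$ to a metric $g_\infty$ for which $D$ is harmonic; in codimension one, ``$D$ harmonic'' means $H_\infty = 0$, i.e., $\tau_1 = 0$ with respect to $g_\infty$.

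For the final assertion, I would invoke Corollary~\ref{C-umbilic}: if $D$ is $g_0$-umbilical (equivalently $g$-umbilical, since $g_0 = g$), then under the assumptions of Theorem~\ref{T-02}---which we have just checked---$D$ is $g_\infty$-totally geodesic. Alternatively, and more directly, one can argue: by Proposition~\ref{P-08b} the EGF preserves umbilicity of $D$, so $D$ is $g_\infty$-umbilical; an umbilical distribution with vanishing mean curvature ($n\,b = H\cdot g_{|D}$ and $H = 0$ force $b = 0$) is totally geodesic. This completes the proof.

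It is worth making the heat-equation mechanism explicit in this case for the reader's benefit, though it is not strictly necessary. Writing $\dt g_t = s_t\,\hat g_t$ with $s_t = -\tfrac2n\,N(\tau_1)$ as in (\ref{eq1-1}), Lemma~\ref{L-btAt2} (or the codimension-one formulas (\ref{E-Atau1})) gives $\dt\tau_1 = -\tfrac n2\,N(s_t) = N(N(\tau_1)) = \Delta^\perp\tau_1$ along each fiber $S^1$, since the $D^\perp$-Laplacian on a $1$-dimensional leaf is just the second $N$-derivative (the leaf being geodesic with respect to the flat metric it inherits, up to reparametrization; more precisely $\Delta^\perp f = N(N(f)) - (\Div^\perp N)\,N(f)$, and one checks $\Div^\perp N = 0$ on a $1$-dimensional distribution). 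Hence $\tau_1$ satisfies the scalar heat equation on each circle fiber, so it converges exponentially to its fiberwise average, and then the integral formula (\ref{E-divHt})---in the form $\int_M N(\tau_1)\,d\vol = \int_M \tau_1^2\,d\vol$ coming from (\ref{E-IF-NF})---forces that limit to be $0$, giving $\tau_1 \to 0$. The bound on $\int_0^t |N(\tau_1)|\,ds$ then yields uniform two-sided bounds $c^{-1}\hat g_0 \le \hat g_t \le c\,\hat g_0$, and Proposition~\ref{P-converge} delivers $C^\infty$-convergence.

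The main (and only genuine) obstacle is really none: everything is inherited from Theorem~\ref{T-02} and Corollary~\ref{C-umbilic} once the two trivial verifications---vacuity of $d^\perp\theta_H = 0$ when $p=1$, and the identification of the $N$-curve fibration with the required $D^\perp$-leaf fibration---are recorded. If one insists on the self-contained heat-equation argument above, the only point requiring a line of care is the identity $\Delta^\perp = N\circ N$ along the fibers, i.e., that $\Div^\perp N = 0$ for a unit field spanning a $1$-dimensional orthogonal distribution, which is immediate from $N(g(N,N)) = 0$.
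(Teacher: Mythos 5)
Your proposal is correct, and its primary route differs slightly from the paper's. You deduce the proposition by specializing Theorem~\ref{T-02} to $p=1$ (observing that the hypothesis $d^\perp\theta_H=0$ is imposed only for $p>1$ and that the $S^1$-fibration of $N$-curves is exactly the required leaf fibration of $D^\perp$), and you get the totally geodesic conclusion from Corollary~\ref{C-umbilic} (equivalently, Proposition~\ref{P-08b} plus the remark that umbilical with $H=0$ forces $b=0$). The paper instead gives a self-contained argument that avoids the $1$-form heat flow of Theorem~B altogether: it shows that the scalar functions $\tau_1$ and $N(\tau_1)$ each satisfy the heat equation $\dt u=N(N(u))$ along the closed $N$-curves, concludes $N(\tau_1^\infty)=0$ from closedness of the fibers, obtains the uniform bounds on $\hat g_t$ and applies Proposition~\ref{P-converge}, and finally kills $\tau_1^\infty$ itself via the integral formula (\ref{E-IF-NF}) with $f=\tau_1$. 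Your supplementary paragraph essentially reconstructs this direct argument (including the correct identities $\Div^\perp N=0$ and $\Delta^\perp=N\circ N$ on the fibers and the use of (\ref{E-IF-NF}) to force the fiberwise-average limit to vanish), so the two approaches coincide in substance; what your citation-based route buys is brevity and a clear display of the logical dependence on Theorem~\ref{T-02}, while the paper's route buys an elementary, purely scalar proof of the codimension-one case that does not rely on the Milgram--Rosenblum machinery for forms.
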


\noindent\textbf{Proof}. For the EGF (\ref{eq1-1}) or (\ref{eq1n-1}),
 by (\ref{E-Atau1}) with $s=-\frac2n N(\tau_1)$ or $s=-\frac2n N(\tau_1)-r(t)$,
 both functions, $\tau_1$ and $N(\tau_1)$, satisfy the heat equation along $N$-curves:
\begin{equation*}
 \dt\tau_1= N(N(\tau_{1})),\qquad \dt(N(\tau_1))= N(\dt\tau_{1})= N(N(N(\tau_{1}))).
\end{equation*}
The unique solution, $\tau^t_1$, exists for all $t\ge0$, and $\tau_1^t\to\tau^\infty_1$ as $t\to\infty$.
Hence, $N(\tau^\infty_1)=const$ on any $N$-curve.
In case of (\ref{eq1n-1}), we have $r(t)\to0$ as $t\to\infty$.
Since $N$-curves are closed,
we obtain $N(\tau^\infty_1)=0$.
With known $\tau^t_1$, (\ref{eq1-1}) has a unique smooth solution
 $\hat g_t=\hat g_0\exp(-\frac2n\int_0^tN(\tau^s_1)\,ds)$ for all $t\ge0$.
We have the uniform bounds
 $c^{-1}\hat g_0\le \hat g_t\le c\,\hat g_0$,
 and
 $c^{-1}b^\perp_0\le b^\perp_t\le c\,b^\perp_0$
for some $c\ge1$ and all $t\ge0$.
By Proposition~\ref{P-converge} we obtain converging $g_t\to g_\infty$ (to a smooth metric) and $b^\perp_t\to b^\perp_\infty$
as $t\to\infty$.
 By the integral formula $\int_M [N(\tau^\infty_1)-(\tau^\infty_1)^2]\,d\,{\vol}_\infty=0$, see
 (\ref{E-IF-NF}) with $f=\tau_1$, we conclude that $\tau^\infty_1=0$ on $M$.
\qed

\subsection{Diffusion of double-twisted product metrics}
\label{sec:twisted}

 Let $M=M_1\times M_2$ be the product of closed Riemannian manifolds $(M_1,g_1)$ and $(M_2,g_2)$
 with the canonical projections $\pi_i:M\to M_i$.
 Given positive differentiable functions $f_i:M\to \RR\ (i = 1,2)$,
 the \textit{metric of a double-twisted product} $M_1\times_{(f_1,f_2)} M_2$ is defined by
 $g=(f_1^2 g_1)\oplus(f_2^2 g_2)$, i.e.,
\[
  g(X,Y) = f_1^2 g_1(\pi_{1*}X, \pi_{1*}Y) + f_2^2 g_2(\pi_{2*}X, \pi_{2*}Y),\quad X,Y\in TM.
\]
The foliations $M_1\times\{y\}$ and $\{x\}\times M_2$ are umbilical with
\textit{mean curvature vectors} $H_1=-\pi_{2*}\nabla(\log f_1)$ and $H_2=-\pi_{1*}\nabla(\log f_2)$.
 This property characterizes the double-twisted product,~\cite{pr}.
If $f_2=1$ then we have a \textit{twisted product}, and a \textit{warped product} if also $f_1$ depends on $M_2$ only.
By Proposition~\ref{P-08b}, EGFs (for $D=\ker\pi_{2*}$ and $D^\perp=\ker\pi_{1*}$) preserve the double-twisted product structure.

\vskip1mm
From Theorem~\ref{T-02} we deduce

\begin{proposition}\label{C-twisted}
Let $M_1\times_{(f_1,f_2)} M_2$ be a double-twisted product of closed Riemannian manifolds $(M_i, g_i)$.
Then (\ref{eq1}) has a unique smooth solution $g_t\in\mathcal{M}$ for all $t\ge0$, consisting of double-twisted product metrics on $M_1\times_{(f_1(t),f_2)} M_2$. As $t\to\infty$,
the metric $g_t$ converges in $C^\infty$-topology to a double-twisted product metric $g_\infty$ corresponding to
$M_1\times_{(\bar f_1,f_2)} M_2$, where $\bar f_1(x)=\int_{M_2} f_1(0, x, y)\,d y_g$.
If~$f_2=1$ (i.e., $g$ is a twisted product) then $g_\infty$ splits as the product $(M_1, \bar f_1\cdot g_1)\times(M_2,g_2)$.
\end{proposition}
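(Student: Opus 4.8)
The plan is to specialize Theorem~\ref{T-02} to the case $M = M_1 \times M_2$, $D = \ker\pi_{2*}$, $D^\perp = \ker\pi_{1*}$, and then identify the limit metric explicitly. First I would check that the hypotheses of Theorem~\ref{T-02} are met: the leaves $\{x\}\times M_2$ of $D^\perp$ compose the trivial fibration $M_2 \hookrightarrow M \to M_1$ with compact orientable leaves, so only the condition $d^\perp\theta_{H} = 0$ needs attention when $p = \dim M_2 > 1$. Here $H = H_1 = -\pi_{2*}\nabla(\log f_1)$, so $\theta_H$ is (up to sign) the $D^\perp$-component of the differential of $\log f_1$ restricted to each leaf; hence $d^\perp\theta_H = -d^\perp d^\perp(\log f_1|_{\{x\}\times M_2}) = 0$ automatically. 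Thus Theorem~\ref{T-02} applies and yields a unique smooth global solution $g_t$ converging in $C^\infty$ to some $g_\infty$ with $H_\infty = 0$.

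Next I would show the solution stays in the double-twisted class and track the twisting function. By the remark following Proposition~\ref{P-08b} (EGFs preserve the umbilicity of $D$ and of $D^\perp$, cf. Propositions~\ref{P-08b} and~\ref{P-08bb}), and by the characterization of double-twisted products via umbilicity of both factors from~\cite{pr}, each $g_t$ is again a double-twisted product metric; moreover from $\dt g_t = -\frac2n(\Div^\perp H_t)\hat g_t$ and $\hat g_t = f_1(t)^2 g_1 \oplus 0$ one reads off that $g_2$ and $f_2$ are unchanged while $f_1(t)^2 = f_1(0)^2 \exp\!\big(-\frac2n\int_0^t \Div^\perp H_s\, ds\big)$, so $g_t$ corresponds to $M_1 \times_{(f_1(t), f_2)} M_2$. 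The key analytic input is that, along each leaf $\{x\}\times M_2$, the function $\log f_1$ evolves by the heat equation: indeed $H_t = -\pi_{2*}\nabla^t(\log f_1(t))$ together with \eqref{E-tauAk-H-2}$_1$, $\dt H_t = \nabla^\perp(\Div^\perp H_t)$, gives $\dt(\log f_1) = \Delta^\perp(\log f_1)$ up to an additive leafwise constant, whose heat flow converges to the leafwise average. Since the leaves are flat copies of $M_2$ with the fixed metric $g_2$, that average is $\bar f_1(x) = \int_{M_2} f_1(0,x,y)\, dy_g$ (with $f_1$ read off at $t=0$, as stated), giving $f_1(\infty) = \bar f_1$, a function of $x \in M_1$ only.

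Finally, when $f_2 = 1$, the limit $g_\infty = (\bar f_1^2 g_1) \oplus g_2$ with $\bar f_1$ depending only on $M_1$ is exactly a (Riemannian) product metric $(M_1, \bar f_1\cdot g_1) \times (M_2, g_2)$ — here I would note that $H_\infty = 0$ forces $\bar f_1$ to be $M_2$-independent, which is already guaranteed by the averaging, and that $H^\perp_\infty = 0$ follows because $H_2 = -\pi_{1*}\nabla(\log f_2) = 0$ is preserved. The main obstacle I anticipate is the bookkeeping in the second paragraph: carefully justifying that $\log f_1$ (not just $H$) satisfies a leafwise heat equation — the subtlety is that \eqref{E-tauAk-H-2} controls $H_t = -\nabla^\perp(\log f_1)$ only up to the leafwise-constant ambiguity in recovering a potential from its gradient, so one must argue that this ambiguity is harmless (e.g. by normalizing the potential, or by working directly with $\theta_H$ and the heat flow on $1$-forms as in the proof of Theorem~\ref{T-02}, then integrating) before concluding convergence of $f_1(t)$ to the average $\bar f_1$.
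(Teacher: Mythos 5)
Your outline is correct and reaches the same key reduction as the paper, but by a more roundabout route. The paper does not invoke Theorem~\ref{T-02} at all: it sets $f_1=e^{\,\phi}$, observes that the double-twisted structure is preserved with $\hat g_t=e^{2\phi(t)}\hat g_0$, and then reads the flow equation $\dt\hat g_t=s\,\hat g_t$ with $s=-\frac2n\Div^\perp H=\frac2n\Delta^\perp\phi$ directly as the scalar leafwise heat equation $\dt\phi=\frac1n\,\Delta^\perp\phi$ (note the factor $1/n$, absent from your version, which the paper removes by rescaling $t\to t/n$). This completely sidesteps the ``potential recovered from its gradient only up to a leafwise constant'' issue that you flag as the main obstacle: the equation for $\phi$ comes from the evolution of the metric itself, not from the evolution of $H=-\nabla^\perp\phi$. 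In fact you already have the resolution in hand --- your own formula $f_1(t)^2=f_1(0)^2\exp\big(-\frac2n\int_0^t\Div^\perp H_s\,ds\big)$, differentiated in $t$ and combined with $\Div^\perp H_s=-\Delta^\perp\log f_1(s)$, gives $\dt\log f_1=\frac1n\Delta^\perp\log f_1$ exactly, with no additive ambiguity --- so you should close the loop there rather than appeal to a normalization of the potential. One further point to be careful about: since the heat equation is satisfied by $\log f_1$ and not by $f_1$, the leafwise limit is $\exp$ of the average of $\log f_1(0,x,\cdot)$ (a geometric mean), which is what the paper's proof actually produces ($\bar f_1=e^{\bar\phi}$ with $\bar\phi$ the leafwise average of $\phi_0$); your assertion that the limit equals $\int_{M_2}f_1(0,x,y)\,dy_g$ follows the proposition's wording but is inconsistent with the heat equation you derive. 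Your observations that $d^\perp\theta_H=0$ holds automatically (since $\theta_H$ is leafwise exact) and that umbilicity preservation keeps $g_t$ in the double-twisted class are correct and are worthwhile complements to the paper's more computational argument.
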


\begin{proof}
Define $\phi:M\to\RR$ by $f_1=e^{\,\phi}$.
The mean curvature vector of $M_1\times\{y\}$ is $H=-\nabla^\perp\phi$.
The evolution $\dt g(t)= (s(t)\,g_1(t))\oplus 0$ preserves the double-twisted product structure.
Denoting $\hat g_t=g_1(t)$, we obtain $g(t)=(f_1(t)^2\hat g_1)\oplus(f_2^2g_2)$,
where $f_2$ and $g_2$ do not depend on $t$.
Assuming $f_1(t)=e^{\phi(t)}$, we find $\hat g_t=e^{2\phi(t)}\hat g_0$.
In this case, $\dt g(t)=2 e^{2\phi(t)}\dt\phi(t)\hat g_0$. Hence, $\dt\phi(t)=\frac12\,s(t)$.

 For $s=-\frac2n\Div^\perp H$, we have $\Div^\perp H=-\Delta^\perp\phi$, and (\ref{eq1}) reads as
\begin{eqnarray}\label{E-polynin1}
 \dt\phi \eq (1/n)\,\Delta^\perp \phi.
\end{eqnarray}
Replacing $t\to t/n$, one may reduce (\ref{E-polynin1}) to the heat equation.
Hence, see Proposition~\ref{P-normEG}, there is a unique solution $\phi(t,x,y)\ (t\ge0)$,
 satisfying $\lim\limits_{t\to\infty}\phi(t,x,y)=\bar\phi(x)$. Certainly, $\bar f_1=e^{\bar\phi}$.
\end{proof}

\begin{example}\rm
(a) Let $M=M_1\times M_2$ be the product of smooth manifolds, and $M_2$ admits a Riemannian metric $g_2$ of quasi-positive Ricci curvature (say, $M_2=S^p$). Then for any vector field $X$ on $M$,
satisfying the condition $d_{2}\theta_{H_0-X}=0$ (derivation along $M_2$),
there is a double-twisted product structure $M=M_1\times_{(f_1,f_2)} M_2$
for which $X$ is the mean curvature vector of a foliation $M_1\times\{y\}$.
 To show this, for any Riemannian metric $g_1$ on $M_1$, consider the metric $g=g_1\oplus g_2$ on $M$,
and apply Proposition~\ref{C-twisted}.

\vskip1mm
(b)
Let $S^1\times_{(f,\,1)} S^1$ be a twisted product of two circles.
Then (\ref{eq1}) has a unique smooth solution $g_t\in\mathcal{M}$ for all $t\ge0$,
consisting of twisted product metrics on the torus~$T^2$.
As~$t\to\infty$, the metric $g_t$ converges in $C^\infty$-topology to the flat metric on the
(generally, non-equilateral) torus.
\end{example}

\section{Appendix: The heat equation}
\label{sec:1-prelim}

Following \cite{jj2011}, we shall briefly discuss the heat equation and the heat flow on 1-forms.

\subsection{The heat equation on a Riemannian manifold}
\label{subsec:heat}

The covariant derivative of the $(1,j)$-tensor $T$ is the $(1,j+1)$-tensor given by
\begin{eqnarray*}
 (\nabla T)(X, Y_1,\ldots, Y_j) \eq(\nabla_X T)(Y_1,\ldots, Y_j)\\
 \eq\nabla_X(T(Y_1,\ldots, Y_j)) -\sum\nolimits_{i\le j} T(Y_1,\ldots,\nabla_X Y_i,\ldots Y_j).
\end{eqnarray*}
If $X$ is a vector field (i.e., a $(1,0)$-tensor), then $\nabla X$ is a $(1,1)$-tensor satisfying $(\nabla X)(Y)=\nabla_Y X$.
If $T$ is a $(1,j)$-tensor field on $(M,g)$, the \textit{divergence} $\Div T$ is the $(0,j)$-tensor
\[
 \Div T(Y_1,\ldots, Y_j) =\sum\nolimits_{i\le n} g((\nabla_{e_i}T)(Y_1,\ldots, Y_j), e_i),
\]
where $\{e_i\}\ (i\le n)$ is a local orthonormal frame.
The \textit{(rough) Laplacian} of $T$ is the divergence of the gradient of the tensor:
 $\Delta T  = \Div(\nabla T)$.
 When applied to functions, we get
the Hessian ${\rm Hess}(u)=\nabla d\,u$ and the Laplacian $\Delta u=\Div(d u)=\tr\!{\rm Hess}(u)$.
If $T$ is a vector, the gradient is a covariant derivative which results in a $(1,1)$-tensor, and the divergence of this is again a vector.

We will briefly discuss the homogeneous \textit{heat equation} on a closed Riemannian manifold $(M, g)$,
\begin{equation}\label{E-heat}
 \dt u=\Delta\,u.
\end{equation}
The \textit{eigenvalue problem} $-\Delta u=\lambda\,u$ on $(M,g)$ has solution with a sequence of eigenvalues
with repetition (each one as many times as the dimension of its finite dimensional eigenspace) $0=\lambda_0<\lambda_1\le\lambda_2\le\cdots\uparrow\infty$.
The smallest eigenvalue is $\lambda_0=0$ whose eigenfunction (normalized to have $L^2$-norm one) is the constant $\phi_0=\vol(M,g)^{-1/2}$.
Let $\phi_j$ be an eigenfunction with eigenvalue $\lambda_j$, satisfying $\int_{M}\phi_j^2(x)\,d x_g=1$.
The fundamental solution of (\ref{E-heat}),
 $G(t, x, y)=\sum\nolimits_{j} e^{-\lambda_j t}\phi_j(x)\,\phi_j(y)$,
is called the \textit{heat kernel}. A solution of (\ref{E-heat}), satisfying $u(x,0)=u_0(x)$ is given~by
\begin{equation}\label{E-heat-sol}
  u(x,t)=\int_{M} G(t,x,y)\,u_0(y)\,d y_g.
\end{equation}
Moreover, if $u_0\in L^2(M)$, the solution converges uniformly, as $t\to\infty$, to a  constant function (harmonic function when $M$ is open). Since $\lim\limits_{t\to\infty} G(t,x,y) = 1/\vol(M,g)$, from (\ref{E-heat-sol}) follows
that the equilibrium ``temperature" is the average of $u_0$:
 $\lim\limits_{t\to\infty} u(x,t) = \int_{M} u_0(x)\,dx_g/\vol(M,g)$.

\begin{example}\label{Ex-S1}\rm
(a) Over (semi-)infinite spatial interval $\RR$ (and similarly over $\RR^p$ or a non-compact flat manifold),
all solutions of (\ref{E-heat})
are obtained from the fundamental solution for the Dirac delta function $u_0(x)=\delta_x(\xi)$,
which is the function $G(t,x,y)=\frac 1{(4\pi t)^{1/2}}\, e^{-(x-y)^2/(4\,t)}$.
For any function $u_0\in L^2(\RR)$, a unique solution of (\ref{E-heat}),
$u(t,x)=\int_\RR u_0(y)G(t,x,y)\,d y$, converges uniformly to a linear function, as $t\to\infty$.
Indeed, if $u_0$ is bounded then the linear function is constant.

(b) The eigenvalues of $-\Delta$ on a flat $n$-torus are the numbers
$\lambda_{l_1\cdots l_n}=l_1^2+\ldots+l_n^2$
with corresponding eigenfunctions $\phi_{l_1\cdots l_n}=\frac1{2\pi}\,e^{-i(l_1 x_1+\ldots+l_n x_n)}$.
Here $l_1,\cdots,l_n$ take all possible positive and negative integer values.
 For a circle $S^1$ of radius one, we have $\lambda_l=l^2$ and $\phi_l(x)=\frac1{\sqrt{2\pi}}e^{-i l x}$, where $l\in\ZZ\setminus\{0\}$.
The Cauchy's problem on a circle
\begin{equation}\label{E-Heat-circle}
  \dt u = \ddx u,\qquad u(0,\cdot)=u_0\in H^2(S^1)
\end{equation}
has a unique solution in the class of functions  $C([0,\infty),\,H^2(S^1))\cap C^1((0,\infty],\,L^2(S^1))$.
By Sobolev embedding theorem, $H^2(S^1)\subset C^1(S^1)$.
A unique solution of (\ref{E-Heat-circle}) has the property $u(t,\cdot)\in C^\infty(S^1)$ for all $t>0$.
Moreover, $u(t,\cdot)\to\bar u_0=\frac1{2\pi}\int_{S^1} u_0(x)\,dx$ as $t\to\infty$,
and $\|u(t,\cdot)-\bar u_0\|\le e^{-t}\|u_0-\bar u_0\|$.
\end{example}

\subsection{The heat flow on 1-forms}
\label{sec:2.4-heat1}

Let $\theta$ be a 1-form in some cohomology class of a closed oriented differentiable manifold $M$.
Recall that the set of equivalence classes of closed 1-forms is a finite-dimensional vector space over $\RR$, called the \textit{first de Rham cohomology group} and denoted by $H^1(M,\RR)$.
A 1-form $\theta$ is closed if $d\theta=0$, and $\theta$ is exact, if there is a function $f$ with $d f=\theta$.
 Because of $d\circ d=0$, exact forms are always closed.
The closed 1-forms $\theta_0,\theta_1$ are cohomologous if  $\theta_0-\theta_1$ is exact.
 The \textit{divergence operator} $\delta$ on forms is adjoint to $d$ w.\,r.\,t. $L^2$-product on $TM$, i.e., $(d\theta,\,\eta)=(\theta,\,\delta\eta)$ for any $q$-form $\eta$ and  any $(q-1)$-form $\theta$.
Notice that $\delta\,\theta=-\Div\theta$ for 1-forms.

 The \textit{Hodge Laplacian operator} acting on differential forms is defined as follows:
 $\Delta_d = -(d\delta+\delta d)$,
it is (formally) selfadjoint and non-positive.
The rough Laplacian and the Hodge Laplacian on scalar functions are the same as the Laplace-Beltrami operator.

 A 1-form $\theta$ is called \textit{harmonic} if $\Delta_d\,\theta=0$;
in this case, on a closed manifold we have $d\,\theta=0$ and $\delta\,\theta=0$.
 In a local chart, for a 1-form $\theta=\theta_i dx^i$, we have
\[
 d\,\theta =\sum\nolimits_{k}\frac{\partial\theta_i}{\partial x^k}\,dx^k\wedge dx^i,\quad
 \delta\,\theta =-\sum\nolimits_{k,l}g^{kl}(\frac{\partial\theta_k}{\partial x^l} -\Gamma^j_{kl}\,\theta_j),\quad
 \Delta_d\,\theta =\sum\nolimits_{k}\frac{\partial^2\theta_i}{(\partial x^k)^2}\,dx^i.
\]
 By Hodge Theorem,
\textit{every cohomology class in $H^1(M,\RR)$ contains precisely one harmonic 1-form}.
A 1-form $\theta_X$ (dual to the vector field $X$) is harmonic if and only if $\Div X=0$ and the $(1,1)$-tensor $\nabla X$ is symmetric.
Following \cite{jj2011}, we present the result of \cite{ML1951} in a convenient for us form

\vskip1.5mm\noindent\textbf{Theorem B.}
 \textit{
Let $\theta$ be a 1-form on closed oriented manifold $M$ of class $C^{\infty}$.
Then the \textit{heat flow}
\begin{equation}\label{E-theta-3}
 \dt\,\theta_t = \Delta_d\,\theta_t,\qquad \theta_{0}=\theta.
\end{equation}
admits a unique solution $\theta_t$
for all $t\ge0$.
 As $t\to\infty$, $\theta_t$ converges exponentially in $ C^{\infty}$-topology towards a~harmonic 1-form $\theta_\infty$, i.e.,
 $\|\theta_t-\theta_\infty\|\le c\cdot e^{-\lambda\,t}$
for some positive $c,\lambda$ ($\lambda$ is independent of~$\theta$).
 If $\theta$ is closed, then all $\theta_t$ are closed as well.}

\section*{Acknowledgment}
The first author
was supported by the Marie-Curie actions grant
No.~276919,
and is grateful to
the Institute of Mathematics of the Jagiellonian University for the hospitality during his stay in Krakow.
The second author was partially supported by the Polish NCN grant No.~N201\,606540.
The authors would like to thank Pawe\l~Walczak (University of \L\'od\'z)
and Vladimir~Sharafutdinov (Sobolev Institute of Mathematics at Novosibirsk) for helpful corrections concerning the manuscript.

\end{document}